\title{Shifted Lanczos method for \\ quadratic forms with Hermitian matrix resolvents}
\author{Keiichi Morikuni\thanks{Faculty of Engineering, Information and Systems, University of Tsukuba, Japan. Email: morikuni@cs.tsukuba.ac.jp. The work was supported in part by the Japan Society for the Promotion of Science (Grants-in-Aid for Young Scientists (B) 16K17639) and Hattori Hokokai Foundation.}}
\date{}
\newcommand{\LINEIF}[2]{%
	\STATE\algorithmicif\ {#1}\ \algorithmicthen\ {#2}%
}
\theoremstyle{plain}
\newtheorem{theorem}{Theorem}[section]
\newtheorem*{remark}{Remark}
\newtheorem{lemma}[theorem]{Lemma}
\numberwithin{equation}{section}
\newfont{\bg}{cmr9 scaled\magstep4}
\newcommand{\bigzerol}{\smash{\lower1.0ex\hbox{\bg 0}}}
\newcommand{\bigzerou}{\smash{\hbox{\bg 0}}}
\begin{document}
\maketitle

\begin{abstract}
Quadratic forms of Hermitian matrix resolvents involve the solutions of shifted linear systems.
Efficient iterative solutions use the shift-invariance property of Krylov subspaces.
The Hermitian Lanczos method reduces a given vector and matrix to a Jacobi matrix (real symmetric tridiagonal matrix with positive super and sub-diagonal entries) and approximates the quadratic form using the Jacobi matrix.
This study develops a shifted Lanczos method that deals directly with the Hermitian matrix resolvent.
We derive a matrix representation of a linear operator that approximates the resolvent by solving a Vorobyev moment problem associated with the shifted Lanczos method.
We show that an entry of the Jacobi matrix resolvent can approximate the quadratic form, matching the moments.
We give a sufficient condition such that the method does not break down, an error bound, and error estimates.
Numerical experiments on matrices drawn from real-world applications compare the proposed method with previous methods and show that the proposed method outperforms well-established methods in solving some problems.
\end{abstract}

\section{Introduction}
\label{sec:intro}
Consider the computation of $m$ quadratic forms
\begin{equation} \label{eq:quadform}
\boldsymbol{v}^\mathsf{H} (z_i \mathrm{I} - A)^{-1} \boldsymbol{v}, \quad i = 1, 2, \dots, m,
\end{equation}
where $\boldsymbol{v}^\mathsf{H}$ denotes the complex conjugate transpose of a vector $\boldsymbol{v} \in \mathbb{C}^n$, $A \in \mathbb{C}^{n \times n}$ is a Hermitian matrix that may be indefinite, and $z_i \in \mathbb{C}$.
Here, $z_i \mathrm{I} - A$ is assumed to be invertible.
If $z_i$ is not real, then $z_i \mathrm{I} - A$ is not Hermitian.

A straightforward approach to the quadratic form~\eqref{eq:quadform} is to solve the shifted linear systems
\begin{align}
(z_i \mathrm{I} - A) \boldsymbol{x}^{(i)} = \boldsymbol{v}
\label{eq:linsys}
\end{align}
and compute $\boldsymbol{v}^\mathsf{H} \boldsymbol{x}^{(i)}$ for $i = 1$, $2$, $\dots$, $m$.
The development of efficient solutions for shifted linear systems~\eqref{eq:linsys} with real symmetric $A$ has been on-going for two decades; these solutions include shifted Krylov subspace methods such as variants of the conjugate orthogonal conjugate gradient (COCG) method~\cite{VorstMelissen1990}, a version of the quasi-minimal residual (QMR\_SYM) method~\cite{Freund1992SISSC}, and the conjugate orthogonal conjugate residual (COCR) method~\cite{SogabeZhang2007JCAM} proposed in~\cite{TakayamaHoshiSogabeZhangFujiwara2006}, \cite{SogabeHoshiZhangFujiwara2008ETNA}, and \cite{SogabeZhang2011}, respectively.
These methods use the shift-invariance property of the Krylov subspace $\mathcal{K}_k (z_i \mathrm{I} - A, \boldsymbol{v}) = \mathcal{K}_k (A, \boldsymbol{v}) = \mathrm{span}\lbrace \boldsymbol{v}_1, A \boldsymbol{v}_1, \dots, A^{k-1} \boldsymbol{v}_1 \rbrace$ and complex symmetry of $z \mathrm{I} - A$ for efficient formulations.
A shifted Krylov subspace method forms basis vectors of the Krylov subspace~$\mathcal{K}_k (A, \boldsymbol{v})$ for a shifted matrix~$z \mathrm{I} - A$ with a particular shift~$z$ by using a short-term recurrence and determines an iterate for the linear system~$(z \mathrm{I} - A) \boldsymbol{x}_z = \boldsymbol{v}$ and associated iterates for other shifted linear systems~$(z_i \mathrm{I} - A) \boldsymbol{x}^{(i)} = \boldsymbol{v}$ simultaneously for different values of $z_i$, $i = 1$, $2$, $\dots$, $m$, without additional matrix-vector products.
Typically, shifted Krylov subspace methods take the iterative residual vectors of the shifted linear systems to be collinear to that of the seed linear system.
Here, we call a representative linear system~$(z \mathrm{I} - A) \boldsymbol{x}_z = \boldsymbol{v}$ a seed linear system.
These methods may suffer from breakdown, i.e.\ division by zero, although this rarely occurs in practice.
Previous studies on shifted CG and MINRES methods~\cite{FrommerMaass1999SISC,Meerbergen2003SIMAX} focused on real shifts.
The technique in \cite{Meerbergen2003SIMAX} improves the convergence of the methods with preconditioning under the assumption that the factorization or the use of a direct solver for the shifted symmetric matrix is performed efficiently.
Previous studies on a shifted Lanczos method~\cite{FrommerKahlLippertRittich2013SIMAX} for a Hermitian positive definite~$A$ with a complex shift~$z_i$ derived its error bound and estimates.
Extensions of the MINRES method~\cite{PaigeSaunders1975} to the shifted linear system~\eqref{eq:linsys} work with the Hermitian~$A$ and complex shift~$z_i$~\cite{GuLiu2013,SeitoHoshiYamamoto2019}.
See \cite{HoshiKawamuraYoshimiMotoyamaMisawaYamajiTodoKawashimaSogabe2020} for more shifted Krylov subspace methods.
The Pad\'{e} approximation via the Lanczos process (PVL)~\cite{FeldmannFreund1995} has attracted interest for the case~$|z| > \rho(A)$, where $\rho(\cdot)$ is the spectral radius of a square matrix, whereas our interest includes the contrasting case~$|z| < \rho(A)$.

This study focuses on exploiting the advantage of a Krylov subspace method in efficiently approximating the Hermitian matrix resolvent and directly computing the quadratic form~\eqref{eq:quadform} without solving the shifted linear systems~\eqref{eq:linsys}.
The key feature of our approach is the development of a shifted Lanczos method.
The Hermitian Lanczos method projects an original model represented by $A$ and $\boldsymbol{v}$ to a lower-order model and matches the lowest-order moments of the original model with those of the reduced model.
We show that the shifted Lanczos method retains such a property.
The Vorobyev moment problem~\cite{Vorobyev1965,Brezinski1996LAA,Strakos2009,LiesenStrakos2013} enables a concise derivation of thisto concisely derive the method.
This problem gives a matrix representation of a linear operator that represents the reduced model to approximate the resolvent~$(z \mathrm{I} - A)^{-1}$.
We show that the $(1, 1)$ entry of the Jacobi matrix resolvent can approximate the quadratic form~\eqref{eq:quadform} with eight additional operations to the Lanczos method for each shift by using a recursive formula.
Moreover, we give a sufficient condition such that the proposed method does not break down.
Breakdown may occur in the COCG, COCR, and QMR\_SYM methods, although this is rarely seen.
Furthermore, we drive an error bound and develop practical error estimates of the shifted Lanczos method.

Practical applications that can benefit from this development include chemistry and \break physics~\cite[Section~3.9]{LiesenStrakos2013}, eigensolvers using complex moments~\cite{SakuraiTadano2007}, the computation of Green's function for a many-particle Hamiltonian~\cite{YamamotoSogabeHoshiZhangFujiwara2008}, the stochastic estimation of the number of eigenvalues~\cite{MaedaFutamuraImakuraSakurai2015JSIAMLetters}, samplers for determinantal point processes~\cite{LiSraJegelka2016ICML} (see also references therein), the approximation of Markov chains in Bayesian sampling~\cite{JohndrowMattinglyMukherjeeDunson2017arXiv}, and computational quantum physics~\cite{HoshiKawamuraYoshimiMotoyamaMisawaYamajiTodoKawashimaSogabe2020}.
An extension of the single-vector case $\boldsymbol{v} \in \mathbb{R}^n$ in~\eqref{eq:quadform} for real $A \in \mathbb{R}^{n \times n}$ to the multiple-vector case $\boldsymbol{v}_1$, $\boldsymbol{v}_2$, $\dots$, $\boldsymbol{v}_\ell \in \mathbb{R}^n$, namely,
\begin{align}
V^\top (z_i \mathrm{I} - A)^{-1} V, \quad V = [\boldsymbol{v}_1, \boldsymbol{v}_2, \dots, \boldsymbol{v}_\ell] \in \mathbb{R}^{n \times \ell}, \quad i = 1, 2, \dots, m
\end{align}
can be reduced to the solutions of bilinear forms ${\boldsymbol{v}_p}^\top (z_i \mathrm{I} - A) \boldsymbol{v}_q$, $p$, $q = 1$, $2$, $\dots$, $\ell$, where $\boldsymbol{v}_p$ is the $p$th column of~$V$.
The bilinear form $\boldsymbol{v}_p (z_i \mathrm{I} - A)^{-1} \boldsymbol{v}_q$ can be further reduced to the quadratic form 
\begin{align}
\boldsymbol{v}_p (z_i \mathrm{I} - A)^{-1} \boldsymbol{v}_q = \frac{1}{4} [ \boldsymbol{s}^\top (z_i \mathrm{I} - A)^{-1} \boldsymbol{s} - \boldsymbol{t}^\top (z_i \mathrm{I} - A)^{-1} \boldsymbol{t}],
\end{align}
where $\boldsymbol{s} = \boldsymbol{v}_p + \boldsymbol{v}_q$ and $\boldsymbol{t} = \boldsymbol{v}_p - \boldsymbol{v}_q$ (cf.\ \cite[p.~114]{GolubMeurant2010}).
This type of problem arises in the analysis of dynamical systems~\cite{BaiGolub2002}.

The rest of this paper is organized as follows.
In Section{~\ref{sec:Lanczos}}, we review the Lanczos method and its moment-matching property.
In Section{~\ref{sec:shiftedLanczos}}, we describe a shifted Lanczos method for computing the quadratic forms~\eqref{eq:quadform}, its moment-matching property, and implementation; discuss related methods; and give a sufficient breakdown-free condition, error bound, and error estimates.
In Section{~\ref{sec:exp}}, we present the results of numerical experiments in which the shifted Lanczos method is compared with previous methods and illustrate the developed error estimates.
In Section{~\ref{sec:conc}}, we conclude the paper.

\section{Lanczos method} \label{sec:Lanczos}
We review the Lanczos method~\cite{Lanczos1952} for Hermitian $A \in \mathbb{C}^{n \times n}$ and $\boldsymbol{v} \in \mathbb{C}^n$.
Algorithm~\ref{alg:Lanczos} gives the procedures of the Lanczos method.
\begin{algorithm}
	\caption{Lanczos method}
	\label{alg:Lanczos}
	\begin{algorithmic}[1]
		\REQUIRE $A \in \mathbb{C}^{n \times n}$, $\boldsymbol{v} \in \mathbb{C}^n$
		\ENSURE $\boldsymbol{v}_i \in \mathbb{C}^n$, $\alpha_i \in \mathbb{R}$, $\beta_i \in \mathbb{R}$, $i = 1$, $2$, $\dots$, $k$
		\STATE $\boldsymbol{v}_1 = \boldsymbol{v} / \| \boldsymbol{v} \|$, $\boldsymbol{u} = A \boldsymbol{v}_1$, $\alpha_1 = \boldsymbol{u}^\mathsf{H} \boldsymbol{v}_1$
		\FOR{$k = 1, 2, \dots$}
		\STATE $\boldsymbol{u} = \boldsymbol{u} - \alpha_k \boldsymbol{v}_k$, $\beta_k = \| \boldsymbol{u} \|$
		\LINEIF{$\beta_k = 0$}{break}
		\STATE $\boldsymbol{v}_{k+1} = (\beta_k)^{-1} \boldsymbol{u}$, $\boldsymbol{u} = A \boldsymbol{v}_{k+1} - \beta_k \boldsymbol{v}_k$, $\alpha_{k+1} = \boldsymbol{u}^\mathsf{H} \boldsymbol{v}_{k+1}$
		\ENDFOR
	\end{algorithmic}
\end{algorithm}

Here, $\| \cdot \|$ denotes the Euclidean norm.
Denote the Lanczos decomposition of $A$ by
\begin{equation} 
A V_k = V_{k+1} T_{k+1, k},
\label{eq:Lanczosdecomp}
\end{equation}
where the columns of $V_k = [\boldsymbol{v}_1, \boldsymbol{v}_2, \dots, \boldsymbol{v}_k] \in \mathbb{C}^{n \times k}$ form an orthonormal basis of the Krylov subspace $\mathcal{K}_k (A, \boldsymbol{v}_1) = \mathrm{span}\lbrace
\boldsymbol{v}_1, A \boldsymbol{v}_1, \dots, A^{k-1} \boldsymbol{v}_1 \rbrace$ and  
\begin{align}
T_{k+1, k} & = 
\begin{bmatrix}
\alpha_1 & \beta_1 & & & \bigzerou \\
\beta_1 & \alpha_2 & \beta_2 \\
& \beta_2 & \ddots & \ddots\\
& & \ddots & & \beta_{k-1} \\
& & & \beta_{k-1} & \alpha_k \\
\bigzerol & & & & \beta_k
\end{bmatrix} \\
& = 
\begin{bmatrix}
T_{k, k} \\
\beta_k {\boldsymbol{e}_k}^\top
\end{bmatrix}
\in \mathbb{R}^{(k+1) \times k}.
\end{align}
Here, $\boldsymbol{e}_k \in \mathbb{R}^k$ is the $k$th Euclidean basis vector and $T_{k, k} \in \mathbb{R}^{k \times k}$ is the Jacobi matrix (real symmetric tridiagonal matrix with positive super and subdiagonal entries).
Then, $V_k^\mathsf{H} A V_k = T_{k, k}$ holds.

\subsection{Hamburger moment problem} \label{sec:Hamburger}
The Lanczos method projects the original model given by a Hermitian matrix~$A \in \mathbb{C}^{n \times n}$ and an initial vector~$\boldsymbol{v} \in \mathbb{C}^{n}$ to a lower-order model given by $T_{k, k} \in \mathbb{R}^{k \times k}$ and $\boldsymbol{e}_1$, matching the lowest-order moments of the original model and those of the reduced model, i.e., it approximates a given matrix~$A$ via moment matching.
A thorough derivation of the well-known moment-matching property~\eqref{eq:moment_matching} presented below is given in~\cite[Chapter~3]{LiesenStrakos2013}.
We review this for completeness.
Given a sequence of scalars $\xi_i$, $i = 0$, $1$, $2$, $\dots$, $2k-1$, the problem of finding a nondecreasing real distribution function $w^{(k)}(\lambda)$, $\lambda \in \mathbb{R}$, with $k$ points of increase such that the Riemann--Stieltjes integral is equal to the given sequence of scalars
\begin{equation}
\int_{-\infty}^\infty \lambda^i \mathrm{d} w^{(k)} (\lambda) = \xi_i, \quad i = 0, 1, \dots, 2k-1
\label{eq:reduced_moment}
\end{equation}
is called the Hamburger moment problem~\cite{Hamburger1919,Hamburger1920a,Hamburger1920b,Hamburger1921}.
The motivation to view the Lanczos method via the Hamburger moment problem instead of the Stieltjes moment problem is to enable working with indefinite Hermitian matrices with a shift.
Note that the Stieltjes moment problem goes on the positive real axis and its relation with the Lanczos method on Hermitian positive definite matrices has been well-established~\cite{LiesenStrakos2013}.

The left-hand side of~\eqref{eq:reduced_moment} is called the $i$th moment with respect to the distribution function $w^{(k)}(\lambda)$.
Set another moment 
\begin{equation}
\xi_i = \int_{-\infty}^\infty \lambda^i \mathrm{d} w (\lambda), \quad i = 0, 1, 2, \dots, 2k-1,
\label{eq:another_moment}
\end{equation}
and the distribution function with $n$ points of increase to
\begin{align}
w(\lambda) =
\begin{cases}
0, \quad & \lambda < \lambda_1, \\
\sum_{j=1}^i w_j, \quad & \lambda_i \leq \lambda < \lambda_{i+1}, \quad i = 1, 2, \dots, n-1,\\
\sum_{j=1}^n w_j = 1, \quad & \lambda_n \leq \lambda
\label{eq:measure}
\end{cases}
\end{align}
associated with weights $w_j = (\boldsymbol{v}^\mathsf{H} \boldsymbol{u}_j)^2 / \| \boldsymbol{v} \|^2$, $j = 1$, $2$, $\dots$, $n$, where $\lambda_1 < \lambda_2 < \dots < \lambda_n$ are the eigenvalues of $A$ and $\boldsymbol{u}_i$, $i=1$, $2$, $\dots$, $n$, are the corresponding eigenvectors.
Here, for clarity, we assume that the eigenvalues of $A$ are distinct without loss of generality.
Thus, the distribution function~$w(\lambda)$ is connected with the eigenpairs of $A$.
Then, we can express the moment~\eqref{eq:another_moment} as the Gauss--Christoffel quadrature and the quadratic form
\begin{align}
\int_{-\infty}^\infty \lambda^i \mathrm{d} w (\lambda) & = \sum_{j=1}^n w_j \{\lambda_j\}^i \label{eq:moment} \\
& = \boldsymbol{v}^\mathsf{H} A^i \boldsymbol{v}, \quad i = 0, 1, 2, \dots.
\end{align}
Thus, the solution of~\eqref{eq:reduced_moment} is given by
\begin{align}
w^{(k)}(\lambda) =
\begin{cases}
0, \quad & \lambda < \lambda_1^{(k)}, \\
\sum_{j=1}^i w_j^{(k)}, \quad & \lambda_i^{(k)} \leq \lambda < \lambda_{i+1}^{(k)}, \quad i = 1, 2, \dots, k-1, \\
\sum_{j=1}^k w_j^{(k)} = 1, & \lambda_k^{(k)} \leq \lambda
\end{cases}
\label{eq:reduced_measure}
\end{align}
associated with weights $w_j^{(k)} = ({\boldsymbol{e}_1}^\top \boldsymbol{u}_j^{(k)})^2$, $j = 1$, $2$, $\dots$, $k$, where $\lambda_1^{(k)} < \lambda_2^{(k)} < \dots < \lambda_k^{(k)}$ are the eigenvalues of $T_{k, k}$ and $\boldsymbol{u}_j^{(k)}$, $j = 1$, $2$, $\dots$, $k$, are the corresponding eigenvectors.
Here, the distribution function~$w^{(k)}(\lambda)$ is connected with the eigenpairs of $T_{k, k}$.
Because the Gauss--Christoffel quadrature is exact for polynomials up to degree $2k-1$
\begin{align}
\int_{-\infty}^\infty \lambda^i \mathrm{d} w^{(k)} (\lambda) & = \sum_{i=1}^k w_j^{(k)} \{\lambda_j^{(k)}\}^i \label{eq:GCexact} \\
& = {\boldsymbol{e}_1}^\top (T_{k, k})^i \boldsymbol{e}_1, \quad i = 0, 1, \dots, 2k-1,
\end{align}
the first $2k$ moments match 
\begin{align}
\boldsymbol{v}^\mathsf{H} A^i \boldsymbol{v} = (\boldsymbol{v}^\mathsf{H} \boldsymbol{v}) {\boldsymbol{e}_1}^\top (T_{k, k})^i \boldsymbol{e}_1, \quad i = 0, 1, \dots, 2k-1.
\label{eq:moment_matching}
\end{align}

\subsection{Model reduction via Vorobyev moment matching} \label{sec:Vorobyev}
We can state the problem of moment matching in the language of matrices via the Vorobyev moment problem.
To derive a linear operator $A_k$ that reduces the model order of $A$, we follow~\cite{Vorobyev1965} for the derivation (see also~\cite{Brezinski1996LAA,Strakos2009}).
Let 
\begin{align}
\begin{cases}
\boldsymbol{y}_1 = A \boldsymbol{v}, \\
\boldsymbol{y}_2 = A \boldsymbol{y}_1 (= A^2 \boldsymbol{v}), \\
\qquad \vdots \\
\boldsymbol{y}_{k-1} = A \boldsymbol{y}_{k-2} (= A^{k-1} \boldsymbol{v}), \\
\boldsymbol{y}_k = A \boldsymbol{y}_{k-1} (= A^k \boldsymbol{v}),
\end{cases},
\end{align}
for $k = 1$, $2$, $\dots$, where $\boldsymbol{v}$, $\boldsymbol{y}_1$, $\boldsymbol{y}_2$, $\dots$, $\boldsymbol{y}_k$ are assumed to be linearly independent.
Then, the Vorobyev moment problem involves determining a sequence of linear operators $A_k$ such that 
\begin{align}
\begin{cases}
\boldsymbol{y}_1 = A_k \boldsymbol{v}, \\
\boldsymbol{y}_2 = A_k \boldsymbol{y}_1 (= (A_k)^2 \boldsymbol{v}), \\
\qquad \vdots \\
\boldsymbol{y}_{k-1} = A_k \boldsymbol{y}_{k-2} (= (A_k)^{k-1} \boldsymbol{v}),\\
Q_k \boldsymbol{y}_k = A_k \boldsymbol{y}_{k-1} (= (A_k)^k \boldsymbol{v}),
\end{cases}	
\end{align}
for $k = 1$, $2$, $\dots$, where $Q_k = V_k V_k^\mathsf{H}$ is the orthogonal projector onto $\mathcal{K}_k (A, \boldsymbol{v})$.
A linear operator $A_k$ reducing the model order of $A$ is given by
\begin{align}
A_k & = Q_k A Q_k \nonumber \\
& = V_k T_{k, k} {V_k}^\mathsf{H},
\label{eq:A_k}
\end{align}
for $k = 1$, $2$, $\dots$, where the sequence $\{ A_k \}_{k \geq 0}$ is strongly convergent to $A$~\cite[Theorem~II]{Vorobyev1965} (see~\cite[Section~4.2]{Brezinski1996LAA} for the derivation of~\eqref{eq:A_k}).
Therefore, the first $2k$ moments of the reduced model match those of the original model
\begin{align}
\boldsymbol{v}^\mathsf{H} A^{i} \boldsymbol{v} & = \boldsymbol{v}^\mathsf{H} (A_k)^i \boldsymbol{v} \nonumber \\
& = (\boldsymbol{v}^\mathsf{H} \boldsymbol{v}) {\boldsymbol{e}_1}^\mathsf{H} (T_{k,k})^i \boldsymbol{e}_1, \quad i = 0, 1, \dots, 2k-1.
\label{eq:Vorobyev_moment_matching}
\end{align}
for $k = 1$, $2$, $\dots$.
We will use this property to derive a shifted Lanczos method in the next section.

\section{Shifted Lanczos method} \label{sec:shiftedLanczos}
Next, we formulate a shifted Lanczos method to approximate the resolvent $(z_i \mathrm{I} - A)^{-1}$ in \eqref{eq:quadform}.
For convenience, we omit subscript $i$ of $z_i$ if no confusion can arise.
The application of Vorobyev's method of moments to the shifted matrix $S = z \mathrm{I} - A$ and vector $\boldsymbol{v}$ gives a matrix representation of a linear operator that represents the reduced model to approximate the resolvent $(z \mathrm{I} - A)^{-1}$.
Let
\begin{align}
\begin{cases}
\boldsymbol{y}_1 = S \boldsymbol{v}, \\
\boldsymbol{y}_2 = S \boldsymbol{y}_1 (=S^2 \boldsymbol{v}), \\
\qquad \vdots \\
\boldsymbol{y}_{k-1} = S \boldsymbol{y}_{k-2} (= S^{k-1} \boldsymbol{v}), \\
\boldsymbol{y}_k = S \boldsymbol{y}_{k-1} (= S^k \boldsymbol{v}), 
\label{eq:Vorobyev_vv}
\end{cases}	
\end{align}
for $k = 1$, $2$, $\dots$, where $\boldsymbol{v}$, $\boldsymbol{y}_1$, $\boldsymbol{y}_2$, $\dots$, $\boldsymbol{y}_k$ are assumed to be linearly independent.
Then, the Vorobyev moment problem involves determining a sequence of linear operators $S_k$ such that 
\begin{align}
\begin{cases}
\boldsymbol{y}_1 = S_k \boldsymbol{v}, \\
\boldsymbol{y}_2 = S_k \boldsymbol{y}_1 (= (S_k)^2 \boldsymbol{v}), \\
\qquad \vdots \\
\boldsymbol{y}_{k-1} = S_k \boldsymbol{y}_{k-2} (= (S_k)^{k-1} \boldsymbol{v}), \\
Q_k \boldsymbol{y}_k = S_k \boldsymbol{y}_{k-1} (= (S_k)^k \boldsymbol{v}),
\end{cases}
\label{eq:Vorobyev_vvv}
\end{align}
for $k = 1$, $2$, $\dots$, where $Q_k = V_k V_k^\mathsf{H}$ is the orthogonal projector onto $\mathcal{K}_k (S, \boldsymbol{v})$.
We first solve the problem for the linear operator $S_k$.
Equations~\eqref{eq:Vorobyev_vvv} can be written as 
\begin{equation}
\boldsymbol{y}_i = (S_k)^i \boldsymbol{v}, \quad i = 1, 2, \dots, k-1, \quad Q_k \boldsymbol{y}_k = (S_k)^k \boldsymbol{v}
\end{equation}
for $k = 1$, $2$, $\dots$.
An arbitrary vector $\boldsymbol{u} \in \mathcal{K}_k(S, \boldsymbol{v})$ is expanded as
\begin{equation}
\boldsymbol{u} = \sum_{i=0}^{k-1} a_i \boldsymbol{y}_i, \quad a_i \in \mathbb{C},
\end{equation}
where $\boldsymbol{y}_0 = \boldsymbol{v}$.
Multiplying both sides by $S$ gives
\begin{equation}
S \boldsymbol{u} = \sum_{i=0}^{k-2} a_i S^{i+1} \boldsymbol{v} + a_{k-1} \boldsymbol{y}_k.
\end{equation}
Projecting this onto $\mathcal{K}_k (S, \boldsymbol{v}) = \mathcal{K}_k (A, \boldsymbol{v})$ (shift-invariance property) gives
\begin{align}
Q_k S \boldsymbol{u} & = \sum_{i=0}^{k-2} a_i (S_k)^{i+1} \boldsymbol{v} + a_{k-1} (S_k)^k \boldsymbol{v} \\
& = \sum_{i=0}^{k-1} a_i (S_k)^{i+1} \boldsymbol{v} \\
& = \sum_{i=0}^{k-1} a_i S_k \boldsymbol{y}_i \\
& = S_k \boldsymbol{u}.
\label{eq:proj_QSu}
\end{align}
Here, the first equality is due to equations \eqref{eq:Vorobyev_vv}, \eqref{eq:Vorobyev_vvv}, and 
\begin{align}
Q_k (S_k)^{i+1} \boldsymbol{v} & = Q_k S^{i+1} \boldsymbol{v} \\
& = S^{i+1} \boldsymbol{v} \in \mathcal{K}_k (S, \boldsymbol{v}), \quad i = 0, 1, \dots, k-2.
\end{align}
Hence, \eqref{eq:proj_QSu} shows that $Q_k S = S_k$ on $\mathcal{K}_k (S, \boldsymbol{v})$.
Because $Q_k \boldsymbol{w} \in \mathcal{K}_k (S, \boldsymbol{v})$ for any vector $\boldsymbol{w} \in \mathbb{C}^n$, we can obtain the expression
\begin{equation}
S_k = Q_k S Q_k
\label{eq:Sk}
\end{equation}
by extending the domain to the whole space $\mathbb{C}^n$.
Note that the sequence $\{ S_k \}_{k \geq 0}$ is strongly convergent to $S$~\cite[Theorem~II]{Vorobyev1965}.
The expression~\eqref{eq:Sk} can be obtained from the shifted Lanczos decomposition~\cite[Lemma~2.1 (i)]{FrommerKahlLippertRittich2013SIMAX}
\begin{equation}
(z \mathrm{I} - A) V_k = V_{k+1} \left(z 
\begin{bmatrix}
\mathrm{I} \\
\boldsymbol{0}^\top
\end{bmatrix} - T_{k+1, k}
\right).
\label{eq:shiftedLanczos}
\end{equation}
Multiplying this by $V_k^\mathsf{H}$ gives
\begin{equation}
V_k^\mathsf{H} S V_k = T_k^<.
\end{equation}
This gives the orthogonally projected restriction 
\begin{align}
S_k & = V_k {V_k}^\mathsf{H} S V_k V_k^\mathsf{H} \\
& = V_k T_k^< V_k^\mathsf{H}.
\end{align}
However, this way does not give the insight of strong convergence.

By using the expression~$S_k$, we show that the moments of the original model with $S$ and $\boldsymbol{v}$ and those of the reduced model with $T_k^< = z \mathrm{I} - T_{k, k}$ and $\boldsymbol{e}_1$ match.
By using the moment-matching property~\eqref{eq:Vorobyev_moment_matching} and the binomial formula, we have
\begin{align}
\boldsymbol{v}^\mathsf{H} S^i \boldsymbol{v} = \boldsymbol{v}^\mathsf{H} (z \mathrm{I} - A_k)^i \boldsymbol{v}, \quad i = 0, 1, \dots, 2k-1,
\end{align}
Therefore, the reduced model matches the first $2k$ moments of the original model
\begin{align}
\boldsymbol{v}^\mathsf{H} S^i \boldsymbol{v} & = \boldsymbol{v}^\mathsf{H} (S_k)^i \boldsymbol{v} \\
& = (\boldsymbol{v}^\mathsf{H} \boldsymbol{v}) \boldsymbol{e}_1^\top (T_k^<)^i \boldsymbol{e}_1, \quad i = 0, 1, \dots, 2k-1.
\end{align}
Note that the left-hand side is equal to
\begin{equation}
\sum_{j=0}^i w_j (z-\lambda_j)^i = \int_{-\infty}^\infty (z-\lambda)^i \mathrm{d} w(\lambda)
\end{equation}
with distribution function~\eqref{eq:measure} and the right-hand side is equal to
\begin{equation}
\sum_{j=0}^i w_j (z-\lambda_j^{(k)})^i = \int_{-\infty}^\infty (z-\lambda)^i \mathrm{d} w^{(k)}(\lambda)
\end{equation}
with distribution function~\eqref{eq:reduced_measure} (cf.\ \cite[Chapter~15]{Szego1959}).
Thus, the Hamburger moment problem (Section~\ref{sec:Hamburger}) gives these connections that carry over the expressions of moments~\eqref{eq:moment} and the exactness of the quadrature~\eqref{eq:GCexact} to the shifted case, whereas Vorobyev's method of moments translates these connections in the language of matrices.

Consider the approximation of $\boldsymbol{v}^\mathsf{H} S^{-1} \boldsymbol{v}$.
Because the matrix representation of the inverse of the reduced-order operator $S_k$ restricted onto $\mathcal{K}_k (S, \boldsymbol{v})$~\cite[p.~79]{HoffmanKunze1971} is given by
\begin{equation}
{S_k}^{-1} = V_k (T_k^<)^{-1} {V_k}^\mathsf{H},
\label{eq:Sk_inv}
\end{equation}
an approximation of $\boldsymbol{v}^\mathsf{H} S^{-1} \boldsymbol{v}$ is given by $\boldsymbol{v}^\mathsf{H} (S_k)^{-1} \boldsymbol{v}$.
Therefore, we obtain
\begin{equation}
\boldsymbol{v}^\mathsf{H} (S_k)^{-1} \boldsymbol{v} = (\boldsymbol{v}^\mathsf{H} \! \boldsymbol{v}) {\boldsymbol{e}_1}^\top (T_k^<)^{-1} \boldsymbol{e}_1 \equiv L_k.
\label{eq:appox_quadform}
\end{equation}

\subsection{Implementation}
The quantity~$L_k$ in \eqref{eq:appox_quadform} is the $(1, 1)$ entry of the resolvent~$(T_k^<)^{-1}$ of a successively enlarging Jacobi matrix~$T_{k, k}$.
An efficient recursive formula for computing such an entry was developed in~\cite[Section~3.4]{GolubMeurant2010}.
The formula is given by
\begin{equation}
L_{k+1} = L_k + c_{k+1} \pi_{k+1}, \quad k = 1, 2, \dots,
\end{equation}
starting with $c_1 = 1$, $\delta_1 = z - \alpha_1$, and $\pi_1 = 1 / \delta_1$, where 
\begin{align}
t_k & = (\beta_k)^2 \pi_k, \\
\delta_{k+1} & = z - \alpha_{k+1} - t_k, \label{eq:delta} \\
\pi_{k+1} & = 1 / \delta_{k+1}, \\
c_{k+1} & = c_k t \pi_k.
\end{align}
We summarize the procedures for approximating quadratic forms~\eqref{eq:quadform} in Algorithm~\ref{alg:shiftedLanczos}.
Here, we denote $L_k^{(i)} = (\boldsymbol{v}^\mathsf{T} \boldsymbol{v}) \boldsymbol{e}_1^\mathsf{T} (z_i \mathrm{I} - T_{k,k})^{-1} \boldsymbol{e}_1$.
Quantities denoted with the superscript~$(i)$ correspond to the shift~$z_i$.
The difference from Algorithm~\ref{alg:Lanczos} is the addition of Lines~2 and 7.
In particular, when $A$ is a real symmetric matrix and $\boldsymbol{v}$ is a real vector in Algorithm~\ref{alg:shiftedLanczos}, only real arithmetic is needed to compute Lines~1, 4, and 6, whereas Lines~2 and 7 require complex arithmetic in general.
Note that $\alpha_{k+1}$ is a real number in theory.
However, when $A$ is not real but complex Hermitian, due to rounding error in finite precision arithmetic, the imaginary part of $\alpha_{k+1}$ may grow and affect the accuracy.
Therefore, it is recommended to explicitly set the real part of the numerically computed $\alpha_{k+1}$ to its value.

\begin{algorithm}
	\caption{Shifted Lanczos method for quadratic forms}
	\label{alg:shiftedLanczos}
	\begin{algorithmic}[1]
		\REQUIRE $A \in \mathbb{C}^{n \times n}$, $\boldsymbol{v} \in \mathbb{C}^n$, $z_i \in \mathbb{C}$, $i=1$, $2$, $\dots$, $m$
		\ENSURE	$L_k^{(i)} \in \mathbb{C}$, $i=1$, $2$, $\dots$, $m$
		\STATE $\boldsymbol{v}_1 = \boldsymbol{v} / \| \boldsymbol{v} \|$, $\boldsymbol{s}_1 = A \boldsymbol{v}_1$, $\alpha_1 = \boldsymbol{u}^\mathsf{H} \boldsymbol{v}_1$
		\STATE  $c_1^{(i)} = \boldsymbol{v}^\top \boldsymbol{v}$, $\delta_1^{(i)} = z_i - \alpha_1$, $\pi_1^{(i)} = 1 / \delta_1^{(i)}$, $L_1^{(i)} = c_1^{(i)} / (z_i -\alpha_1)$, $i = 1$, $2$, $\dots$, $m$
		\FOR{$k = 1, 2, \dots$, until convergence}
		\STATE $\boldsymbol{t}_k = \boldsymbol{s}_k - \alpha_k \boldsymbol{v}_k$, $\beta_k = \| \boldsymbol{t}_k \|$
		\LINEIF{$\beta_k = 0$}{break}
		\STATE $\boldsymbol{v}_{k+1} = (\beta_k)^{-1} \boldsymbol{t}_k$, $\boldsymbol{s}_{k+1} = A \boldsymbol{v}_{k+1} - \beta_k \boldsymbol{v}_k$, $\alpha_{k+1} = \boldsymbol{s}_{k+1}^\mathsf{H} \boldsymbol{v}_{k+1}$
		\STATE $t_k^{(i)} = (\beta_k)^2 \pi_k^{(i)}$, $\delta_{k+1}^{(i)} = z_i - \alpha_{k+1} - t_k^{(i)}$, $\pi_{k+1}^{(i)} = 1 / \delta_{k+1}^{(i)}$, $c_{k+1}^{(i)} = c_k^{(i)} t_k^{(i)} \pi_k^{(i)}$, $L_{k+1}^{(i)} = L_k^{(i)} + c_{k+1}^{(i)} \pi _{k+1}^{(i)}$, $i=1$, $2$, $\dots$, $m$
		\ENDFOR
	\end{algorithmic}
\end{algorithm}

We compare the shifted Lanczos method with related methods in terms of computational cost.
Algorithms~\ref{alg:shiftedCOCG}, \ref{alg:shiftedCOCR}, and \ref{alg:shiftedMINRES} give simple modifications of the shifted COCG, COCR, and MINRES methods, respectively, for computing quadratic forms~\eqref{eq:quadform}.
Here, $z_s$ is the seed shift.
The modifications are given by applying $\boldsymbol{v}^\top$ to the $k$th iterate $\boldsymbol{x}_k$ and associated vectors.
They produce approximations $G_k^{(i)}$, $R_k^{(i)}$, and $M_k^{(i)}$, respectively, to the quadratic forms~\eqref{eq:quadform}.
In the shifted COCG and COCR methods, if a satisfactory seed iterate is obtained, one can use the seed switching technique~\cite{YamamotoSogabeHoshiZhangFujiwara2008} to choose a different shift as the seed shift~$z_s$.
Similarly to the shifted MINRES method~\cite[Section~2.3]{SeitoHoshiYamamoto2019}, the shifted Lanczos method does not need such a seed switching technique.

\begin{remark}
	The modifications applied to the shifted COCG, COCR, and MINRES methods for computing quadratic forms to derive Algorithms~\ref{alg:shiftedCOCG}, \ref{alg:shiftedCOCR}, and \ref{alg:shiftedMINRES}, respectively, can also be applied to the shifted CG method to compute quadratic forms.
	However, such a shifted modification of the CG method~\cite[Section~2.2]{HoshiKawamuraYoshimiMotoyamaMisawaYamajiTodoKawashimaSogabe2020} is mathematically equivalent to the shifted Lanczos method for Hermitian positive definite~$z \mathrm{I} - A$, whereas it is not mathematically equivalent to the shifted Lanczos method in general.
	Such a shifted CG method may not work on the case where $z$ is not real, because the shifted coefficient matrix~$z \mathrm{I} - A$ is not Hermitian for $z \in \mathbb{C} \backslash \mathbb{R}$.
	With the connection between the tridiagonal entries of the Jacobi matrix and the scalar coefficients of vectors in the CG method~\cite[Section~6.7]{Saad2003}, we may formulate the shifted CG method for quadratic forms, which is mathematically equivalent to the shifted Lanczos method for quadratic forms.
	See \cite[Section~9.6]{Meurant2006} for different aspects of the shifted CG method.
\end{remark}

For simplicity of comparison, we count basic scalar, vector, and matrix operations.
The Lanczos method (Algorithm~\ref{alg:Lanczos}) needs one vector scale (\texttt{scale}), one dot product (\texttt{dot}), one vector norm (\texttt{norm}), two scalar--vector additions (\texttt{axpy}) and one matrix--vector product (\texttt{matvec}) per iteration.
Table~\ref{tbl:op} gives the number of basic vector and matrix operations of the shifted methods.
Table~\ref{tbl:scalar_op} gives the number of scalar operations for each shift $z_i$ per iteration.
These tables show that in terms of the cost per iteration, the shifted Lanczos method is the cheapest of the methods compared.

\begin{algorithm}
	\caption{Shifted COCG method for quadratic forms}
	\label{alg:shiftedCOCG}
	\begin{algorithmic}[1]
		\REQUIRE $A \in \mathbb{R}^{n \times n}$, $\boldsymbol{v} \in \mathbb{C}^n$, $z_s \in \mathbb{C}$, $z_i \in \mathbb{C}$, $i=1$, $2$, $\dots$, $m$
		\ENSURE	$G_k^{(i)} \in \mathbb{C}$, $i=1$, $2$, $\dots$, $m$
		\STATE $\alpha_{-1} = 1$, $\beta_{-1} = 0$, $\boldsymbol{p}_{-1} = \boldsymbol{0}$, $\boldsymbol{r}_0 = \boldsymbol{v}$, $\boldsymbol{p}_0 = \boldsymbol{r}_0$
		\STATE $\pi_{-1}^{(i)} = \pi_0^{(i)} = 1$, $p_0^{(i)} = \boldsymbol{v}^\mathsf{H} \boldsymbol{r}_0$, $G_0^{(i)} = 0$, $i = 1$, $2$, $\dots$, $m$
		\FOR{$k = 1, 2, \dots$, until convergence}
		\LINEIF{${\boldsymbol{p}_{k-1}}^\top (z_s \mathrm{I} - A) \boldsymbol{p}_{k-1} = 0$ or ${\boldsymbol{r}_{k-1}}^\top \boldsymbol{r}_{k-1} = 0$}{switch the seed}
		\STATE $\alpha_{k-1} = ({\boldsymbol{r}_{k-1}}^\top \boldsymbol{r}_{k-1}) / ({\boldsymbol{p}_{k-1}}^\top (z_s \mathrm{I} - A) \boldsymbol{p}_{k-1})$, $\boldsymbol{r}_k = \boldsymbol{r}_{k-1} - \alpha_{k-1} (z_s \mathrm{I} - A) \boldsymbol{p}_{k-1}$, $\beta_{k-1} = ({\boldsymbol{r}_k}^\top \boldsymbol{r}_k) / ({\boldsymbol{r}_{k-1}}^\top \boldsymbol{r}_{k-1})$, $r_k = \boldsymbol{v}^\mathsf{H} \boldsymbol{r}_k$, $\boldsymbol{p}_k = \boldsymbol{r}_k + \beta_{k-1} \boldsymbol{p}_{k-1}$ 
		\FOR{$i = 1$, $2$, $\dots$, $m$}
		\STATE $\pi_k^{(i)} = [ 1 + \alpha_{k-1} (z_i - z_s) + (\beta_{k-2}/\alpha_{k-2}) \alpha_{k-1} ] \pi_{k-1}^{(i)} - (\beta_{k-2}/\alpha_{k-2}) \alpha_{k-1} \pi_{k-2}^{(i)}$
		\LINEIF{$\pi_k^{(i)} = 0$}{output $G_{k-1}^{(i)}$}
		\STATE $\alpha_{k-1}^{(i)} = (\pi_{k-1}^{(i)} / \pi_k^{(i)}) \alpha_{k-1}$, $G_k^{(i)} = G_{k-1}^{(i)} + \alpha_{k-1}^{(i)} p_{k-1}^{(i)}$, $\beta_{k-1}^{(i)} = (\pi_{k-1}^{(i)} / \pi_k^{(i)})^2 \beta_{k-1}$, $p_k^{(i)} = r_k / \pi_k^{(i)} + \beta_{k-1}^{(i)} p_{k-1}^{(i)}$
		\ENDFOR
		\ENDFOR
	\end{algorithmic}
\end{algorithm}

\begin{algorithm}
	\caption{Shifted COCR method for quadratic forms}
	\label{alg:shiftedCOCR}	
	\begin{algorithmic}[1]
		\REQUIRE $A \in \mathbb{R}^{n \times n}$, $\boldsymbol{v} \in \mathbb{C}^n$, $z_s \in \mathbb{C}$, $z_i \in \mathbb{C}$, $i=1$, $2$, $\dots$, $m$
		\ENSURE	$R_k^{(i)} \in \mathbb{C}$, $i=1$, $2$, $\dots$, $m$
		\STATE $\alpha_{-1} = 1$, $\beta_{-1} = 0$, $\boldsymbol{q}_{-1} = \boldsymbol{0}$, $\boldsymbol{r}_0 = \boldsymbol{v}$, $r_0 = \boldsymbol{v}^\mathsf{H} \boldsymbol{r}_0$
		\STATE $p_{-1}^{(i)} = 0$, $\pi_{-1}^{(i)} = \pi_0^{(i)} = 1$, $p_0^{(i)} = \boldsymbol{v}^\mathsf{H} \boldsymbol{r}_0$, $R_0^{(i)} = 0$, $i = 1$, $2$, $\dots$, $m$
		\FOR{$k = 1, 2, \dots$, until convergence}
		\STATE $\boldsymbol{q}_{k-1} = (z_s \mathrm{I} - A) \boldsymbol{r}_{k-1} + \beta_{k-2} \boldsymbol{q}_{k-2}$
		\LINEIF{${\boldsymbol{q}_{k-1}}^\top \boldsymbol{q}_{k-1} = 0$}{switch the seed}
		\STATE $\alpha_{k-1} = [{\boldsymbol{r}_{k-1}}^\top (z_s \mathrm{I} - A) \boldsymbol{r}_{k-1}] / ({\boldsymbol{q}_{k-1}}^\top \boldsymbol{q}_{k-1})$
		\FOR{$i = 1$, $2$, $\dots$, $m$}
		\STATE  $\pi_k^{(i)} = (1 + (\beta_{k-2} / \alpha_{k-2}) \alpha_{k-1} + \alpha_{k-1} (z_i - z_s)) \pi_{k-1}^{(i)} - (\beta_{k-2} / \alpha_{k-2}) \alpha_{k-1} \pi_{k-2}^{(i)}$
		\LINEIF{$\pi_k^{(i)} = 0$}{output $R_{k-1}^{(i)}$}
		\STATE $\beta_{k-2}^{(i)} = (\pi_{k-2}^{(i)} / \pi_{k-1}^{(i)})^2 \beta_{k-2}$, $\alpha_{k-1}^{(i)} = (\pi_{k-1}^{(i)} / \pi_k^{(i)}) \alpha_{k-1}$, $p_{k-1}^{(i)} = r_{k-1} / \pi_{k-1}^{(i)} + \beta_{k-2}^{(i)} p_{k-2}^{(i)}$, $R_k^{(i)} = R_{k-1}^{(i)} + \alpha_{k-1}^{(i)} p_{k-1}^{(i)}$
		\ENDFOR
		\STATE $\boldsymbol{r}_k = \boldsymbol{r}_{k-1} - \alpha_{k-1} \boldsymbol{q}_{k-1}$, $r_k = \boldsymbol{v}^\mathsf{H} \boldsymbol{r}_k$
		\LINEIF{${\boldsymbol{r}_{k-1}}^\top (z_s \mathrm{I} - A) \boldsymbol{r}_{k-1} = 0$}{switch the seed}		
		\STATE $\beta_{k-1} = [{\boldsymbol{r}_k}^\top (z_s \mathrm{I} - A) \boldsymbol{r}_k] / [{\boldsymbol{r}_{k-1}}^\top (z_s \mathrm{I} - A) \boldsymbol{r}_{k-1}]$
		\ENDFOR
	\end{algorithmic}
\end{algorithm}

{
	\begin{algorithm}
		\caption{Shifted MINRES method for quadratic forms}
		\label{alg:shiftedMINRES}
		\begin{algorithmic}[1]
			\REQUIRE $A \in \mathbb{C}^{n \times n}$, $\boldsymbol{v} \in \mathbb{C}^n$, $\boldsymbol{x}_0 \in \mathbb{C}^n$, $z_s \in \mathbb{C}$, $z_i \in \mathbb{C}$, $i=1$, $2$, $\dots$, $m$
			\ENSURE	$M_k^{(i)} \in \mathbb{C}$, $i=1$, $2$, $\dots$, $m$
			\STATE $\beta_0 = 0$, $\boldsymbol{q}_0 = \boldsymbol{0}$, $\boldsymbol{r}_0 = \boldsymbol{v} - (z_s \mathrm{I} - A) \boldsymbol{x}_0$, $\boldsymbol{q}_1 = {\| \boldsymbol{r}_0 \|}^{-1} \boldsymbol{r}_0$, $q_1 = \boldsymbol{v}^\mathsf{H} \boldsymbol{q}_1$
			\STATE $f_1^{(i)} = 1$, $p_{-1}^{(i)} = p_0^{(i)} = 0$, $M_0^{(i)} = 0$, $i = 1$, $2$, $\dots$, $m$
			\FOR{$k = 1, 2, \dots$, until convergence}
			\STATE $\boldsymbol{s}_k = A \boldsymbol{q}_k - \beta_{k-1} \boldsymbol{q}_{k-1}$, $\alpha_k = {\boldsymbol{s}_k}^\mathsf{H} \boldsymbol{q}_k$, $\boldsymbol{t}_k = \boldsymbol{s}_k - \alpha_k \boldsymbol{q}_k$, $\beta_k = \| \boldsymbol{t}_k \|$, $q_k = \boldsymbol{v}^\mathsf{H} \boldsymbol{q}_k$
			\FOR{$i = 1$, $2$, $\dots$, $m$}
			\STATE $r_{k-2, k}^{(i)} = 0$, $r_{k-1, k}^{(i)} = \beta_{k-1}$, $r_{k, k}^{(i)} = z_i - \alpha_k$
			\LINEIF{$k \geq 3$}{update $[ r_{k-2, k}^{(i)}, r_{k-1, k}^{(i)} ]^\top= G_{k-2}^{(i)} [ r_{k-2, k}^{(i)}, r_{k-1, k}^{(i)} ]^\top$}
			\LINEIF{$k \geq 2$}{update $[ r_{k-1, k}^{(i)}, r_{k, k}^{(i)} ]^\top = G_{k-1}^{(i)} [ r_{k-1, k}^{(i)}, r_{k, k}^{(i)}]^\top$}
			\STATE Compute $G_k^{(i)} = \left[ \begin{smallmatrix} c_k^{(i)} & \bar{s}_k^{(i)} \\ -s_k^{(i)} & \bar{c}_k^{(i)} \end{smallmatrix}\right]$ and update $r_{k, k}^{(i)}$ such that $[ r_{k, k}^{(i)}, 0 ]^\top = G_k^{(i)} [ r_{k, k}^{(i)}, \beta_k ]^\top$, $|c_k^{(i)}|^2 + |s_k^{(i)}|^2 = 1$, $c_k^{(i)}$, $s_k^{(i)} \in \mathbb{C}$.
			\STATE $p_k^{(i)} = (r_{k, k}^{(i)})^{-1} (q_k	 - r_{k-2, k}^{(i)} p_{k-2}^{(i)} - r_{k-1, k}^{(i)} p_{k-1}^{(i)})$, $M_k^{(i)} = M_{k-1}^{(i)} + \| \boldsymbol{r}_0 \| c_k^{(i)} f_k^{(i)} p_k^{(i)}$, $f_{k+1}^{(i)} = -\bar{s}_k^{(i)} f_k^{(i)}$
			\ENDFOR
			\STATE $\boldsymbol{q}_{k+1} = (\beta_k)^{-1} \boldsymbol{t}_k$
			\ENDFOR
		\end{algorithmic}
	\end{algorithm}
	
	\setlength{\tabcolsep}{10pt}
	\begin{table}[t]
		\centering
		\scriptsize
		\caption{Basic vector and matrix operations}
		\begin{tabular}{lrrrrr}
			\hline\noalign{\smallskip}
			Method & \texttt{scale} & \texttt{dot} & \texttt{norm} & \texttt{axpy} & \texttt{matvec} \\
			\noalign{\smallskip}\hline\noalign{\smallskip}
			Shifted Lanczos & 1 & 1 & 1 & 2 & 1 \\
			Shifted COCG & 1 & 1 & 1 & 2 & 1 \\
			Shifted COCR & 0 & 3 & 0 & 2 & 1 \\
			Shifted MINRES & 1 & 2 & 1 & 2 & 1 \\	
			\noalign{\smallskip}\hline
		\end{tabular}
		\label{tbl:op}
		\begin{minipage}{0.76\hsize}
			Method: name of the method, \texttt{scale}: vector scale, \texttt{dot}: dot product, \texttt{norm}: vector norm, \texttt{axpy}: scalar--vector addition, \texttt{matvec}: matrix--vector product.
		\end{minipage}
		
		\caption{Scalar operations for each shift $z_i$ per iteration}
		\begin{tabular}{lrrrrr}
			\hline\noalign{\smallskip}
			Method & $+$ & $\times$ & $/$ & $\sqrt{}$ & Total\\
			\noalign{\smallskip}\hline\noalign{\smallskip}
			Shifted Lanczos & 3 & 4 & 1 & 0 & 8 \\
			Shifted COCG & 6 & 9 & 3 & 0 & 18 \\
			Shifted COCR & 6 & 8 & 3 & 0 & 17 \\
			Shifted MINRES & 11 & 20 & 3 & 1 & 35 \\
			\noalign{\smallskip}\hline
		\end{tabular}
		\label{tbl:scalar_op}
		\begin{minipage}{0.64\hsize}
			Method: name of the method, $+$: addition, $\times$: multiplication, $/$: division, $\sqrt{}$: square root, Total: total number of operations.
		\end{minipage}
	\end{table}
}

\subsection{Breakdown-free condition}
A breakdown resulting from division by zero for $\delta_{k+1}^{(i)} = 0$ may occur in Line~7 of Algorithm~\ref{alg:shiftedLanczos} before a solution is obtained.
Therefore, we give a sufficient condition such that the shifted Lanczos method does not break down.
Let $|T_0^<| = 1$ for convenience, where $|\cdot|$ denotes the determinant of a matrix.
For convenience, we omit superscript $(i)$ from the quantities given in Algorithm~\ref{alg:shiftedLanczos} and prepare lemmas on shifted versions of well-known properties of the Jacobi matrix.

\begin{lemma} \label{lm:zeroG}
	Let $T_k^<$ and $\delta_k$ be defined as above.
	Assume that $|T_i^<| \ne 0$ holds for $i = 1$, $2$, $\dots$, $k$, $k \in \mathbb{Z}_{>0}$.
	If $\delta_{k+1} = 0$ for $k \in \mathbb{Z}_{>0}$, then we have $|T_{k+1}^<| = 0$.
\end{lemma}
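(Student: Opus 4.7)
The plan is to identify the quantity $\delta_k$ as the ratio of two consecutive leading principal minors of $T_k^<$, and then read off the lemma as an immediate consequence. Concretely, I will show by induction that
\begin{equation*}
\delta_k = \frac{|T_k^<|}{|T_{k-1}^<|}, \qquad k = 1, 2, \dots, k+1,
\end{equation*}
under the standing hypothesis that $|T_i^<| \ne 0$ for $i = 1, 2, \dots, k$ (so that all the ratios and all the $\pi_i = 1/\delta_i$ are well defined for $i \le k$). With this identity in hand, the lemma reduces to the observation that $\delta_{k+1} = 0$ forces $|T_{k+1}^<| = 0$, because $|T_k^<| \ne 0$ by assumption.

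First I would record the three-term recurrence satisfied by the leading principal minors of the shifted tridiagonal matrix $T_j^< = z\mathrm{I} - T_{j,j}$. Expanding $|T_{j+1}^<|$ by cofactors along its last row (or last column) and using that the sub- and super-diagonal entries of $T_{j+1}^<$ are $-\beta_1, \dots, -\beta_j$ gives
\begin{equation*}
|T_{j+1}^<| = (z - \alpha_{j+1})\,|T_j^<| - \beta_j^2\,|T_{j-1}^<|,
\end{equation*}
with the convention $|T_0^<| = 1$. This is the standard Jacobi-matrix determinant recurrence, merely applied to the complex-shifted matrix.

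Next I would run the induction on $k$. The base case $k=1$ is $\delta_1 = z - \alpha_1 = |T_1^<|/|T_0^<|$, which holds by definition. For the inductive step, assume $\delta_k = |T_k^<|/|T_{k-1}^<|$ and $|T_k^<| \ne 0$. Then, using the defining relations $t_k = \beta_k^2 \pi_k = \beta_k^2/\delta_k$ and $\delta_{k+1} = z - \alpha_{k+1} - t_k$ from Algorithm~\ref{alg:shiftedLanczos} together with the determinant recurrence above,
\begin{equation*}
\delta_{k+1} = (z - \alpha_{k+1}) - \frac{\beta_k^2\,|T_{k-1}^<|}{|T_k^<|} = \frac{(z - \alpha_{k+1})\,|T_k^<| - \beta_k^2\,|T_{k-1}^<|}{|T_k^<|} = \frac{|T_{k+1}^<|}{|T_k^<|},
\end{equation*}
which closes the induction.

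Finally, if $\delta_{k+1} = 0$, multiplying through by $|T_k^<| \ne 0$ in the identity just established yields $|T_{k+1}^<| = 0$, which is the claim. I do not anticipate any real obstacle here; the only subtle point is making sure that all $\pi_i = 1/\delta_i$ used to form $t_k$ are well defined at the moment of the inductive step, which is exactly what the hypothesis $|T_i^<| \ne 0$ for $i \le k$ guarantees.
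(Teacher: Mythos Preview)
Your proof is correct and follows essentially the same route as the paper: both hinge on the identity $\delta_j = |T_j^<|/|T_{j-1}^<|$ together with the three-term determinant recurrence $|T_{j+1}^<| = (z-\alpha_{j+1})|T_j^<| - \beta_j^2 |T_{j-1}^<|$. The only difference is that you supply a short self-contained inductive derivation of these facts, whereas the paper simply cites them from \cite{GolubMeurant2010}.
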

\begin{proof}
	The condition $\delta_{k+1} = 0$ gives $t_k = z - \alpha_{k+1}$.
	It follows from \cite[Lemma~3.2]{GolubMeurant2010} 
	\begin{align}
	|T_{k+1}^<| = (z-\alpha_{k+1}) |T_k^<| - (\beta_k)^2 |T_{k-1}^<|, \quad k > 0
	\end{align}	
	and \cite[Section~3.3]{GolubMeurant2010} 
	\begin{equation}
	\delta_{k+1} = \frac{|T_{k+1}^<|}{|T_k^<|}, \quad k > 0
	\label{eq:delta_k}
	\end{equation}
	that
	\begin{align}
	|T_{k+1}^<| & = |T_k^<| \left( z - \alpha_{k+1} - (\beta_k)^2 \frac{|T_{k-1}^<|}{|T_k^<|} \right) \\
	& = |T_k^<| \left( z - \alpha_{k+1} - t_k \right) \\
	& = 0.
	\end{align}
	\qed
\end{proof}

Then, the breakdown-free condition is described as follows.

\begin{theorem} \label{th:breakdown_free}
	Let $\delta_k$ be defined as above.
	Let $\lambda_1$ and $\lambda_n$ be the smallest and largest eigenvalues of $A$, respectively.
	If $z \in \mathbb{C}$ satisfies $z \not \in [\lambda_1, \lambda_n]$, then $\delta_{k} \ne 0$ holds for $k \in \mathbb{Z}_{> 0}$.
\end{theorem}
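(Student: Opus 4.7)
The plan is to reduce the nonvanishing of $\delta_k$ to the statement that $z$ is not an eigenvalue of the Jacobi matrix $T_{k,k}$, and then use standard eigenvalue interlacing to place the eigenvalues of $T_{k,k}$ inside $[\lambda_1,\lambda_n]$.

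First I would invoke the identity \eqref{eq:delta_k}, namely $\delta_{k+1}=|T_{k+1}^<|/|T_k^<|$ (with $|T_0^<|=1$), together with $\delta_1=z-\alpha_1=|T_1^<|$. These show that proving $\delta_k\ne 0$ for all $k\ge 1$ is equivalent to proving $|T_k^<|\ne 0$ for all $k\ge 1$, i.e.\ that $z$ is not an eigenvalue of any $T_{k,k}$. This lets me bypass the inductive use of Lemma~\ref{lm:zeroG} and go directly to a spectral statement about the compressed matrix.

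Next I would use the fact from~\eqref{eq:Lanczosdecomp} and the line after it that $V_k^\mathsf{H} A V_k=T_{k,k}$, where $V_k$ has orthonormal columns. Because $A$ is Hermitian, $T_{k,k}$ is a Hermitian compression of $A$. By the Courant--Fischer min--max principle (or equivalently Cauchy interlacing), every eigenvalue $\lambda_j^{(k)}$ of $T_{k,k}$ satisfies $\lambda_1\le \lambda_j^{(k)}\le \lambda_n$. Since $T_{k,k}$ is real symmetric, these eigenvalues are moreover real, so the spectrum of $T_{k,k}$ is contained in the real interval $[\lambda_1,\lambda_n]$.

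Finally, under the hypothesis $z\notin[\lambda_1,\lambda_n]$, $z$ cannot equal any $\lambda_j^{(k)}$ for any $k\ge 1$, hence $|T_k^<|=\prod_{j=1}^{k}(z-\lambda_j^{(k)})\ne 0$. Combined with the first step, this yields $\delta_k\ne 0$ for every $k\in\mathbb{Z}_{>0}$. There is no real obstacle here; the only point that needs a moment of care is the base case $\delta_1=z-\alpha_1$, where $\alpha_1=\boldsymbol{v}_1^\mathsf{H} A\boldsymbol{v}_1$ is a Rayleigh quotient and therefore lies in $[\lambda_1,\lambda_n]$, so that $\delta_1\ne 0$ under the same hypothesis. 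The hardest conceptual ingredient is the containment of the spectrum of the compression in $[\lambda_1,\lambda_n]$, but this is a standard consequence of min--max and needs no new work.
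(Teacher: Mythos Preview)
Your proof is correct and follows essentially the same route as the paper: eigenvalue interlacing forces the spectrum of $T_{k,k}$ into $[\lambda_1,\lambda_n]$, so $z\notin[\lambda_1,\lambda_n]$ gives $|T_k^<|\ne 0$, and from this one concludes $\delta_k\ne 0$. The only cosmetic difference is that the paper packages the last step through Lemma~\ref{lm:zeroG} (the contrapositive of ``$\delta_{k+1}=0\Rightarrow|T_{k+1}^<|=0$''), whereas you invoke the underlying determinant ratio~\eqref{eq:delta_k} directly; both rest on the same recursion and neither buys anything the other does not.
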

\begin{proof}
	From the interlacing property of eigenvalues~\cite[Theorem~4.3.17]{HornJohnson2013}, $T_{k, k}$ does not have an eigenvalue equal to $z$.
	Hence, $|T_k^<| \ne 0$ holds.
	From Lemma~\ref{lm:zeroG}, the assertion holds.
	\qed
\end{proof}

Theorem~\ref{th:breakdown_free} shows that the shifted Lanczos method does not break down whenever each $z_i \in \mathbb{C}$ satisfies $z_i \not \in [\lambda_1, \lambda_n]$.
The condition $z \not \in [\lambda_1, \lambda_n]$ in Theorem~\ref{th:breakdown_free} is not necessarily equivalent to assuming that $z \mathrm{I} - A$ is positive or negative definite because $z \in \mathbb{C}$.
The shifted MINRES methods~\cite{GuLiu2013,SeitoHoshiYamamoto2019} do not break down for nonsingular $z \mathrm{I} - A$, whereas the shifted COCG and COCR methods may break down.

Projection methods for symmetric or Hermitian eigenproblems~\cite{SakuraiSugiura2003JCAM,Polizzi2009} typically take shift points for quadrature points in a circle and can circumvent taking quadrature points on the real line.
Therefore, the shifted Lanczos method can avoid breakdown when applied to quadratic forms in these methods with particular choices of quadrature points.

\subsection{Convergence bound}
The approximation of the quadratic form~$\boldsymbol{v}^\mathsf{T} (z \mathrm{I} - A)^{-1} \boldsymbol{v}$ by using the shifted Lanczos method can be viewed as a way of solving the shifted linear system~$(z \mathrm{I} - A) \boldsymbol{x} = \boldsymbol{v}$ by using the same method and computing $\boldsymbol{v}^\mathsf{T} \boldsymbol{x}_*$, where $\boldsymbol{x}_* = S^{-1} \boldsymbol{v}$.
Concretely, to determine the $k$th iterate~$\boldsymbol{x}_k$ of the Lanczos method for the linear system~$S \boldsymbol{x} = \boldsymbol{v}$, the method imposes the Galerkin condition
\begin{align}
\boldsymbol{v} - S \boldsymbol{x}_k \perp \mathcal{K}_k (A, \boldsymbol{v}).
\end{align}
The iterate is the same as the CG iterate if $S$ is Hermitian positive definite; otherwise, they may be different.
The absolute error of the shifted Lanczos method for quadratic forms for the $k$th iteration is 
\begin{align}
\varepsilon_k = | L_k - \boldsymbol{v}^\mathsf{H} S^{-1} \boldsymbol{v} |
\label{eq:absolute_error}
\end{align}
and its upper bound is given by using the Cauchy-Schwarz inequality
\begin{align}
\varepsilon_k & = | \boldsymbol{v}^\mathsf{H} ( \boldsymbol{x}_k - S^{-1} \boldsymbol{v} ) | \\
& \leq \| \boldsymbol{v} \| \| \boldsymbol{x}_k - \boldsymbol{x}_* \|.
\end{align}
For the Hermitian positive definite linear systems~$A \boldsymbol{x} = \boldsymbol{b}$, $\boldsymbol{b} \in \mathbb{C}^n$, a well-known upper bound~\cite{Kaniel1966} of the Lanczos method is related to the $A$-norm and a recent upper bound depends on the distribution of the eigenvalues~\cite[Theorem~B.1]{MuscoMuscoSidford2018SODA}.

The following assertions give an error bound of the shifted Lanczos method for quadratic forms.

\begin{theorem} \label{th:error_bound}
	Let $A \in \mathbb{C}^{n \times n}$ be a Hermitian matrix, $z \in \mathbb{C}$, $S = z \mathrm{I} - A$, $\boldsymbol{v} \in \mathbb{C}^n$, and $\mathbb{P}_k$ be the set of all polynomials with degree less than $k$.
	Then, the absolute error of the shifted Lanczos method for the quadratic form~$\boldsymbol{v}^\mathsf{H} S^{-1} \boldsymbol{v}$ for the $k$th iteration satisfies
	\begin{align}
	\varepsilon_k \leq 2 \tau_k \| \boldsymbol{v} \|^2.
	\label{eq:errnrm}
	\end{align}
	where 
	\begin{align}
	\tau_k = \min_{p \in \mathbb{P}_k} \max_{t \in [\lambda_1, \lambda_n]} | p(z-t) - (z-t)^{-1} |.
	\label{eq:tau}
	\end{align}
\end{theorem}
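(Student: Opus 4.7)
The plan is to combine the moment-matching identity derived in Section~\ref{sec:shiftedLanczos} with a polynomial best-approximation argument, in the spirit of classical Lanczos-type error bounds for linear systems. For any polynomial $p \in \mathbb{P}_k$, I would insert and subtract the two quantities $\boldsymbol{v}^\mathsf{H} p(S) \boldsymbol{v}$ and $(\boldsymbol{v}^\mathsf{H}\boldsymbol{v}) \boldsymbol{e}_1^\top p(T_k^<) \boldsymbol{e}_1$ inside $\varepsilon_k = |L_k - \boldsymbol{v}^\mathsf{H} S^{-1}\boldsymbol{v}|$ and apply the triangle inequality. The cross term $\boldsymbol{v}^\mathsf{H} p(S)\boldsymbol{v} - (\boldsymbol{v}^\mathsf{H}\boldsymbol{v})\boldsymbol{e}_1^\top p(T_k^<)\boldsymbol{e}_1$ vanishes by the moment-matching identity $\boldsymbol{v}^\mathsf{H} S^i \boldsymbol{v} = (\boldsymbol{v}^\mathsf{H}\boldsymbol{v}) \boldsymbol{e}_1^\top (T_k^<)^i \boldsymbol{e}_1$ for $i = 0, 1, \dots, 2k-1$, extended by linearity, because $\deg p \le k-1 \le 2k-1$. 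This leaves only $|\boldsymbol{v}^\mathsf{H}[p(S) - S^{-1}] \boldsymbol{v}|$ and $\|\boldsymbol{v}\|^2 |\boldsymbol{e}_1^\top [p(T_k^<) - (T_k^<)^{-1}] \boldsymbol{e}_1|$ to estimate.

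To bound each of these I would diagonalize $A = U \Lambda U^\mathsf{H}$ with $\Lambda = \diag(\lambda_1, \dots, \lambda_n)$, observe that $S = U(z\mathrm{I} - \Lambda)U^\mathsf{H}$ is \emph{normal} with eigenvalues $z - \lambda_j$, and expand $\boldsymbol{v}$ in the eigenbasis to obtain
\begin{equation*}
\boldsymbol{v}^\mathsf{H} [p(S) - S^{-1}] \boldsymbol{v} = \sum_{j=1}^n |\boldsymbol{u}_j^\mathsf{H} \boldsymbol{v}|^2 \left[ p(z - \lambda_j) - (z-\lambda_j)^{-1} \right].
\end{equation*}
Since $\sum_j |\boldsymbol{u}_j^\mathsf{H}\boldsymbol{v}|^2 = \|\boldsymbol{v}\|^2$, the modulus is at most $\|\boldsymbol{v}\|^2 \max_{t \in [\lambda_1, \lambda_n]} |p(z-t) - (z-t)^{-1}|$. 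The same spectral argument applied to the Jacobi matrix $T_{k,k}$ in place of $A$, using $\|\boldsymbol{e}_1\|=1$ and the Cauchy interlacing property (already invoked in Theorem~\ref{th:breakdown_free}) to place the eigenvalues of $T_{k,k}$ inside $[\lambda_1,\lambda_n]$, would bound the second term by the same quantity.

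Summing the two bounds gives $\varepsilon_k \le 2 \|\boldsymbol{v}\|^2 \max_{t \in [\lambda_1,\lambda_n]} |p(z-t) - (z-t)^{-1}|$ for every $p \in \mathbb{P}_k$. Since the choice of $p$ is free, taking the minimum over $p \in \mathbb{P}_k$ on the right-hand side recovers $\tau_k$ and yields the claim. The main obstacle is conceptual rather than computational: one must verify that the spectral decomposition argument is valid even when $z \notin \mathbb{R}$, in which case $S$ is \emph{not} Hermitian. This is harmless because $S$ remains normal as a complex shift of a Hermitian matrix, so $p(S)$ and $S^{-1}$ admit the diagonalization used above, and the same holds for $T_k^<$ with $T_{k,k}$ real symmetric.
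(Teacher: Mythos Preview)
Your argument is correct and reaches the same bound, but the route differs from the paper's. The paper first passes through the Lanczos iterate for the linear system: it uses Cauchy--Schwarz to write $\varepsilon_k \le \|\boldsymbol{v}\|\,\|\boldsymbol{x}_k-\boldsymbol{x}_*\|$ with $\boldsymbol{x}_k=\|\boldsymbol{v}\|V_k(T_k^<)^{-1}\boldsymbol{e}_1$, then bounds the vector error $\|V_k(T_k^<)^{-1}\boldsymbol{e}_1-S^{-1}\boldsymbol{v}_1\|$ by inserting $V_k p(T_k^<)\boldsymbol{e}_1-p(S)\boldsymbol{v}_1$, which vanishes thanks to the identity $p(S)\boldsymbol{v}_1=V_k p(T_k^<)\boldsymbol{e}_1$ coming from the shifted Lanczos decomposition~\eqref{eq:shiftedLanczos}; the remaining pieces are controlled by the operator norms $\|p(S)-S^{-1}\|$ and $\|p(T_k^<)-(T_k^<)^{-1}\|$ together with eigenvalue interlacing. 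You instead stay entirely at the scalar level: the cancelling ingredient is the moment-matching identity $\boldsymbol{v}^\mathsf{H} S^i\boldsymbol{v}=(\boldsymbol{v}^\mathsf{H}\boldsymbol{v})\,\boldsymbol{e}_1^\top(T_k^<)^i\boldsymbol{e}_1$ established in Section~\ref{sec:shiftedLanczos}, and the two surviving terms are bounded by expanding the quadratic forms in the eigenbases of $A$ and $T_{k,k}$ (using normality of $S$ and $T_k^<$) rather than via operator norms. Both proofs hinge on the same interlacing step to confine the spectrum of $T_{k,k}$ to $[\lambda_1,\lambda_n]$. Your version is a little more direct for the quadratic-form problem and makes explicit use of the paper's moment-matching result; the paper's version has the side benefit of yielding a bound on $\|\boldsymbol{x}_k-\boldsymbol{x}_*\|$ itself.
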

\begin{proof}
	If $\boldsymbol{x}_*$ and $\boldsymbol{x}_k$ are defined as above, then the error norm has the expression
	\begin{align}
	\| \boldsymbol{x}_k - \boldsymbol{x}_* \| & = \| V_k ( T_{k, k}^< )^{-1} ( \| \boldsymbol{v} \| \boldsymbol{e}_1 ) - S^{-1} \boldsymbol{v} \| \\
	& = \| V_k ( T_{k, k}^< )^{-1} \boldsymbol{e}_1 - S^{-1} \boldsymbol{v}_1 \| \| \boldsymbol{v} \|.
	\end{align}
	For any polynomial $p \in \mathbb{P}_k$, the first factor of the last quantity is bounded as 
	\begin{align}
	& \| V_k ( T_{k, k}^< )^{-1} \boldsymbol{e}_1 - S^{-1} \boldsymbol{v}_1 \| \\
	\leq & \| V_k [ p(T_{k, k}^<) - ( T_{k, k}^< )^{-1} ] \boldsymbol{e}_1 - [p(S) - S^{-1}] \boldsymbol{v}_1 \| + \| V_k p( T_{k,k}^< ) \boldsymbol{e}_1 - p (S) \boldsymbol{v}_1 \| \\
	\leq & \| p ( T_{k, k}^< ) - ( T_{k, k}^< )^{-1} \| + \| p (S) - S^{-1} \|
	\end{align}
	with $V_k^\mathsf{H} V_k = \mathrm{I}$ and $\| \boldsymbol{e}_1 \| = \| \boldsymbol{v}_1 \| = 1$.
	Here, we used the shifted Lanczos decomposition~\eqref{eq:shiftedLanczos} and the identity 
	\begin{align}
	p (S) \boldsymbol{v}_1 = V_k p (T_{k, k}^<) \boldsymbol{e}_1,
	\end{align}
	cf.~\cite[Lemma~4.1]{MuscoMuscoSidford2018SODA}.
	Because of the interlacing property of eigenvalues~\cite[Theorem~4.3.17]{HornJohnson2013}
	\begin{align}
	\| p (S) - (S)^{-1} \| \leq \max_{t \in [ \lambda_1, \lambda_n ]} | p(z - t) - (z - t)^{-1} |
	\end{align}
	and 
	\begin{align}
	\| p ( T_{k, k}^< ) - ( T_{k, k}^< )^{-1} \| \leq \max_{t \in [ \lambda_1, \lambda_n ]} | p(z - t) - (z - t)^{-1} |.
	\end{align}
	Therefore, the assertion~\eqref{eq:errnrm} holds.
	\qed
\end{proof}

The shifted Lanczos method uses a short-term recurrence and may suffer from rounding errors.
Rounding errors in the Lanczos method may quickly lead to loss of orthogonality of the computed basis vectors $\boldsymbol{v}_i$ of the Krylov subspace and thus could cause a delay in convergence.
We leave an open problem of giving the bound in finite precision, cf.~\cite[Theorem~6.2]{MuscoMuscoSidford2018SODA}.

\subsection{Estimation of error} \label{sec:errest}
The bound presented in the previous section gives insights on the convergence of the shifted Lanczos method for quadratic forms but is not practically used as a stopping criterion in the iteration of Algorithm~\ref{alg:shiftedLanczos} because it needs to know the condition number of $z \mathrm{I} - A$.
To check if a satisfactory solution is obtained for each iteration in Algorithm~\ref{alg:shiftedLanczos} in practice, we formulate two estimates of the error~$\varepsilon_k$.

First, the shifted Lanczos decomposition~\eqref{eq:shiftedLanczos} gives
\begin{align}
S V_k = V_k T_k^< - \beta_k \boldsymbol{v}_{k+1} \boldsymbol{e}_k.
\end{align}
Multiplying this by $\boldsymbol{v}_1^\mathsf{T} S^{-1}$ from the left and $(T_k^<)^{-1} \boldsymbol{e}_1$ from the right gives
\begin{align}
\boldsymbol{e}_1 (T_k^<)^{-1} \boldsymbol{e}_1 = \boldsymbol{v}_1 S^{-1} \boldsymbol{v}_1 - \beta_k (\boldsymbol{v}_1 S^{-1} \boldsymbol{v}_{k+1}) [\boldsymbol{e}_k ( T_k^< )^{-1} \boldsymbol{e}_1 ].
\end{align}
Together with $\| \boldsymbol{v} \|^2$, we have
\begin{align}
\varepsilon_k = \beta_k \| \boldsymbol{v} \|^2 | ( \boldsymbol{v}_1^\mathsf{H} S^{-1} \boldsymbol{v}_{k+1} ) [ \boldsymbol{e}_k^\mathsf{T} (T_k^<)^{-1} \boldsymbol{e}_1 ] |.
\end{align}
With the representation~\eqref{eq:Sk_inv} and a positive integer~$d$, we approximate $\boldsymbol{v}_1^\mathsf{H} S^{-1} \boldsymbol{v}_{k+1}$ by
\begin{align}
\boldsymbol{v}_1^\mathsf{H} (S_{k+d})^{-1} \boldsymbol{v}_{k+1} = \boldsymbol{e}_1 ( T_{k+d}^< )^{-1} \boldsymbol{e}_{k+1}.
\end{align}
Therefore, we obtain the following estimate:
\begin{align}
\varepsilon_k \simeq \beta_k \| \boldsymbol{v} \|^2 | [ \boldsymbol{e}_1^\mathsf{T} (T_k^<)^{-1} \boldsymbol{e}_k ] [ \boldsymbol{e}_1 ( T_{k+d}^< )^{-1} \boldsymbol{e}_{k+1} ] | \equiv \mu_{k, d}.
\end{align}
This estimate~$\mu_{k, d}$ requires additional $d$ iterations and needs $\boldsymbol{e}_1^\mathsf{T} (T_k^<)^{-1} \boldsymbol{e}_k$ and $\boldsymbol{e}_1 ( T_{k+d}^< )^{-1} \boldsymbol{e}_{k+1}$ to compute.
The former is the $(1, k)$ entry of $( T_k^< )^{-1}$ and the latter is the $(1, k+1)$ entry of $( T_{k+d}^< )^{-1}$.
These quantities can be computed recursively~\cite[Sections~3.2, 3.4]{GolubMeurant2010} as follows:
\begin{align}
\boldsymbol{e}_1^\mathsf{T} ( T_k^< )^{-1} \boldsymbol{e}_k & = (-1)^{k-1} \frac{\beta_1 \beta_2 \cdots \beta_{k-1}}{\delta_1 \delta_2 \cdots \delta_k}, \\
\boldsymbol{e}_1^\mathsf{T} ( T_{k+d} )^{-1} \boldsymbol{e}_{k+1} & = (-1)^k \frac{\beta_1 \beta_2 \cdots \beta_k}{\delta_1 \delta_2 \cdots \delta_{k+d}} \varphi_2^{(k+d)} \varphi_3^{(k+d)} \cdots \varphi_d^{(k+d)},
\end{align}
where $\delta_k$ is defined in \eqref{eq:delta} and 
\begin{align}
\varphi_d^{(k+d)} = z - \alpha_{k+d}, \quad \varphi_j^{(k+d)} = z - \alpha_{k+j} - \frac{\beta_{k+j}^2}{\varphi_{j+1}^{(k+d)}}, \quad j = d-1, d-2, \dots, 2.
\end{align}
Therefore, we may update $\boldsymbol{e}_1^\mathsf{T} ( T_k^< )^{-1} \boldsymbol{e}_k$ by
\begin{align}
\boldsymbol{e}_1^\mathsf{T} ( T_{k+1}^< )^{-1} \boldsymbol{e}_{k+1} = - \frac{\beta_k}{\delta_{k+1}} \boldsymbol{e}_1^\mathsf{T} ( T_k^< )^{-1} \boldsymbol{e}_k,
\end{align}
To update $\boldsymbol{e}_1^\mathsf{T} ( T_{k+1}^< )^{-1} \boldsymbol{e}_{k+1}$ for each shift per iteration, one multiplication by $\beta_{k-1}$ and one division by $\delta_k$ are required.
To compute $\varphi_j^{(k+d)}$ for $j = 2$, $3$, $\dots$, $d$ for each shift per iteration, we require one minus in $z - \alpha_{k+d}$, $d-2$ minuses and $d-2$ divisions in $(z-\alpha_{k+j}) - \beta_{k+j}^2 / \varphi_{j+1}^{(k+d)}$ if one stores $z - \alpha_{k+j}$ and $\beta_{k+j}^2$ for $j = 2$, $3$, $\dots$, $d-1$ computed in Line~7 of Algorithm~\ref{alg:shiftedLanczos}.
To update $\boldsymbol{e}_1^\mathsf{T} ( T_{k+d} )^{-1} \boldsymbol{e}_{k+1}$ for each shift per iteration, $d$ multiplications by $\varphi_j^{(k+d)}$, one multiplication by $\beta_k$, and one division by $\delta_{k+d}$ are required.

Second, the triangular equation gives
\begin{align}
\varepsilon_k & = | L_k - L_{k+d} + L_{k+d} - \boldsymbol{v}^\mathsf{H} S^{-1} \boldsymbol{v} | \\
& \leq | L_k - L_{k+d} | + | L_{k+d} - \boldsymbol{v}^\mathsf{H} S^{-1} \boldsymbol{v} |.
\end{align}
Under the assumption that the approximation error for the ($k+d$)th iteration is significantly smaller than that for the $k$th iteration, we obtain an estimate for the $k$th iteration
\begin{align}
\varepsilon_k \simeq | L_k - L_{k+d} | \equiv \nu_{k, d}.
\end{align}
This estimate~$\nu_{k, d}$ requires additional $d$ iterations and one minus in $L_k - L_{k+d}$.
An analogous estimate can be found in \cite[Section~4]{StrakosTichy2002ETNA} for the $A$-norm error in the CG method under a similar assumption.
The shifted coefficient matrix~$z \mathrm{I} - A$ may not be Hermitian and may not form the $(z \mathrm{I} - A)$-norm in the usual sense.

\section{Numerical experiments} \label{sec:exp}
Numerical experiments were performed to compare the proposed method with the previous methods---shifted COCG method (Algorithm~\ref{alg:shiftedCOCG}), shifted COCR method (Algorithm~\ref{alg:shiftedCOCR}), shifted MINRES method (Algorithm~\ref{alg:shiftedMINRES}), shifted Lanczos method (Algorithm~\ref{alg:shiftedLanczos}, and direct solver using the MATLAB function \texttt{mldivide} for solving~\eqref{eq:linsys}---in terms of the number of iterations and CPU time.

All computations were performed on a computer with an Intel Xeon E5-2670 v2 2.50 GHz CPU, 256 GB of random-access memory (RAM), and CentOS 6.10.
All programs were coded and run in MATLAB R2019a in double-precision floating-point arithmetic with unit round-off at~$2^{-52} \simeq 2.2 \cdot 10^{-16}$.

Table~\ref{tb:matrix_info} gives information about the test matrices, including the size of each matrix, density of nonzero entries [\%], (estimated) condition number, and application from which the matrix arose.
The condition number was estimated using the MATLAB function~\texttt{condest}.
Matrices mhd1280b, apache2, CurlCurl\_3 and thermal2 are from~\cite{DavisHu2011}; the other matrix, VCNT1000000std, is from~\cite{HoshiImachiKuwataKakudaFujitaMatsui2019JJIAM}.
Matrix mhd1280b is complex symmetric and matrix conf5.4-00l8x8-2000 is complex Hermitian and indefinite, whereas the others are real symmetric.
The Cholesky factorization~\cite{Rump2006BIT} proved that matrices mhd1280b and apache2 are positive definite.
According to the information in \cite{DavisHu2011}, matrix CurlCurl\_3 is not positive definite and matrix thermal2 is positive definite.

The condition number for CurlCurl\_3 could not be computed because of insufficient computer memory.
Vector $\boldsymbol{v}$ was simply set to $\boldsymbol{v} = n^{-1/2} \boldsymbol{e}$ for reproducibility, where $\boldsymbol{e}$ is the all-ones vector.
The shifts were set to
\begin{equation}
z_i = \exp \left(- \frac{2i+1}{2m} \pi \mathrm{i} \right), \quad i = 1, 2, \dots, m, \quad m = 16,
\label{eq:z_i}
\end{equation}
where $\mathrm{i}$ is the imaginary unit.
Because $\mathrm{Im} (z_i) \ne 0$, the conditions in Theorem~\ref{th:breakdown_free} are satisfied and the shifted Lanczos method is breakdown-free.
These shifts demonstrate typical choices of quadrature points in the projection method for eigenproblems~\cite{SakuraiTadano2007}.

\setlength{\tabcolsep}{6pt}
\begin{table}[t]
	\scriptsize
	\centering
	\caption{Information on test matrices}
	\begin{tabular}{lrrrl}
		\hline\noalign{\smallskip}
		Matrix & $n$  & Density [\%] & \texttt{condest} & Application \\
		\noalign{\smallskip}\hline\noalign{\smallskip}
		mhd1280b & 1{,}280 & $1.4 \cdot 10^{0\phantom{-}}$ & $6.0 \cdot 10^{12}$ & Magnetohydrodynamics \\
		conf5.4-00l8x8-2000 & 49{,}152 & $8.2 \cdot 10^{-2}$ & $3.6\cdot10^{4\phantom{0}}$ & Quantum chromodynamics \\
		apache2 &  715{,}176 & $9.4\cdot10^{-4}$ & $5.3\cdot10^{6\phantom{0}}$ & Structural analysis \\
		VCNT1000000std & 1{,}000{,}000 & $4.0\cdot10^{-3}$ & $1.2\cdot10^{7\phantom{0}}$ & Quantum mechanics \\
		CurlCurl\_3 & 1{,}219{,}574 & $9.1\cdot10^{-4}$ & --- & Electromagnetic analysis \\
		thermal2 & 1{,}228{,}045 & $5.7\cdot10^{-4}$ & $7.5\cdot10^{6\phantom{0}}$ & Steady-state thermal analysis \\
		\noalign{\smallskip}\hline
	\end{tabular}
	\label{tb:matrix_info}
	\begin{minipage}{0.98\hsize}
		Matrix: name of the matrix, $n$: size of the matrix, density: density of nonzero entries, \texttt{condest}: condition number estimated by using the MATLAB function \texttt{condest}, application: application from which the matrix arises.
	\end{minipage}
\end{table}

\subsection{Comparisons in terms of CPU time} \label{sec:practical}
We compare the methods in terms of CPU time.
Figure~\ref{fig:practical} shows the relative error versus the number of iterations for the compared methods on test matrices.
The shifted methods terminated when the largest relative error among $i = 1$, $2$, $\dots$, $16$ became less than or equal to $10^{-10}$.
Here, we adopted the numerical solution computed by MATLAB's sparse direct solver as the exact solution.
Table~\ref{tb:direct_solver} gives the largest relative residual norm $\max_i \| \boldsymbol{v} - (z_i \mathrm{I} - A)	 \tilde{\boldsymbol{x}}^{(i)} \| / \| \boldsymbol{b} \|$ for MATLAB's sparse direct solver for the test matrices, where $\tilde{\boldsymbol{x}}^{(i)}$ is the numerical solution of the linear system~$(z_i \mathrm{I}-A) \boldsymbol{x}^{(i)} = \boldsymbol{b}$ computed by the sparse direct solver.
The sparse direct solver was accurate on mhd1280b, VCNT1000000std, conf5.4-00l8x8-2000, and thermal2 and not accurate on apache2 and CurlCurl\_3.
The convergence curve plotted corresponds to the case where the required number of iterations is the largest among shifts $z_i$, $i=1$, $2$, $\dots$, $16$, because this case determines the total CPU time.
Such cases are for $z_1$ on mhd1280b, conf5.4-00l8x8-2000, apache2, and thermal2; for $z_6$ on CurlCurl\_3; and for $z_16$ on VCNT1000000std.
The shifted Lanczos method converged faster than the other methods on apache2, CurlCurl\_3, and thermal2 and was competitive with the shifted MINRES method on VCNT1000000std.
The shifted COCR and MINRES methods were almost identical on CurlCurl\_3.
The convergence curves of the four methods on VCNT1000000std and CurlCurl\_3 decreased monotonically.
Although the shifted MINRES method is monotonically non-increasing in terms of the residual norm, Figures~\ref{fig:practical}~\subref{fig:apache2} and \ref{fig:practical}~\subref{fig:thermal2} show that the shifted MINRES method does not necessarily converge monotonically in terms of the error.
The convergence curves of the four methods are oscillatory on apache2 and thermal2. 
The shifted Lanczos method did not reach the stopping criterion on CurlCurl\_3, whereas the shifted COCR and MINRES methods did.
The curves of the shifted MINRES and Lanczos methods for conf5.4-00l8x8-2000, those of the shifted COCG and Lanczos methods for VCNT1000000std, those of the shifted COCG and Lanczos methods for CurlCurl\_3, and those of the shifted COCR and MINRES methods for CurlCurl\_3 seem to overlap.

\begin{figure}[t!]
	\centering
	\begin{minipage}{0.49\hsize}
		\centering\includegraphics[scale=0.6]{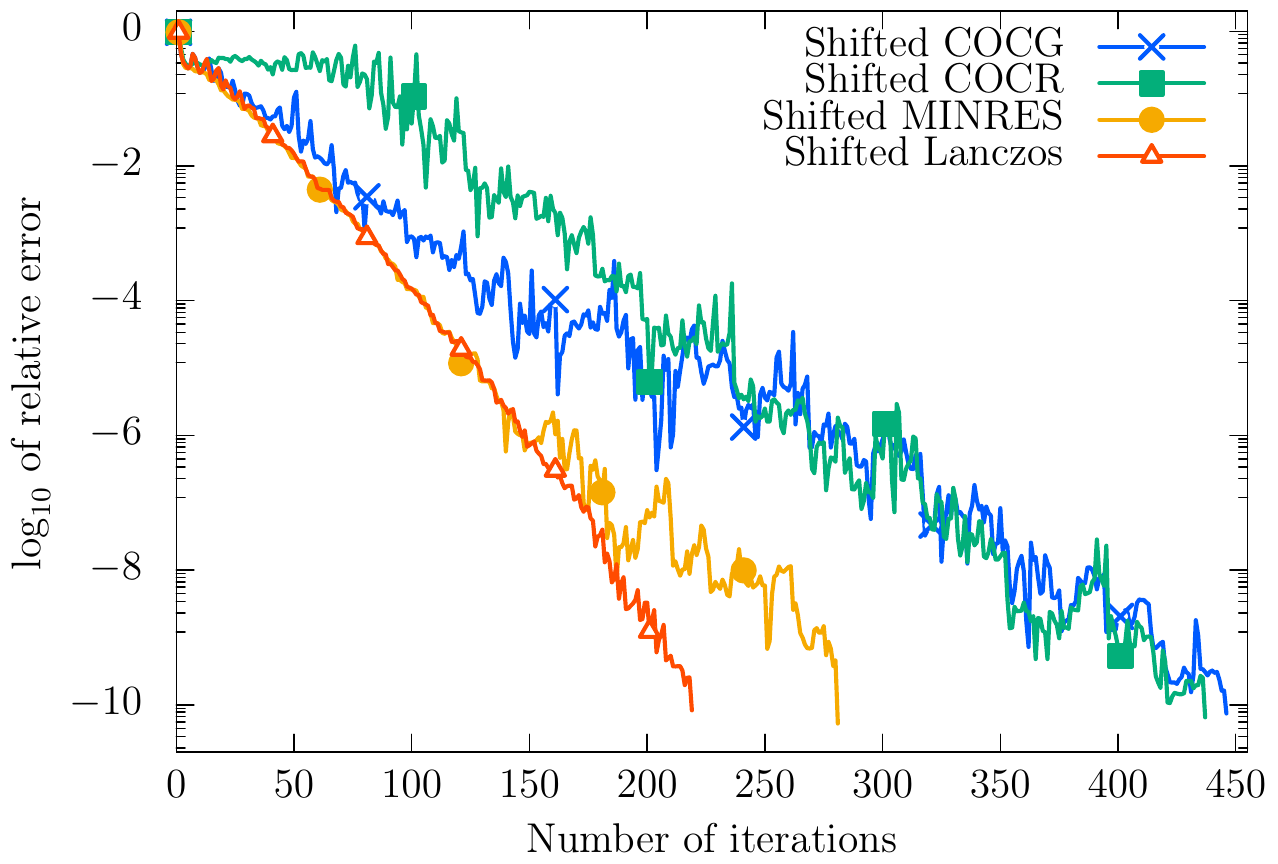}
		\subcaption{mhd1280b.}
		\label{fig:mhd1280b}
	\end{minipage}
	\begin{minipage}{0.49\hsize}
		\centering\includegraphics[scale=0.6]{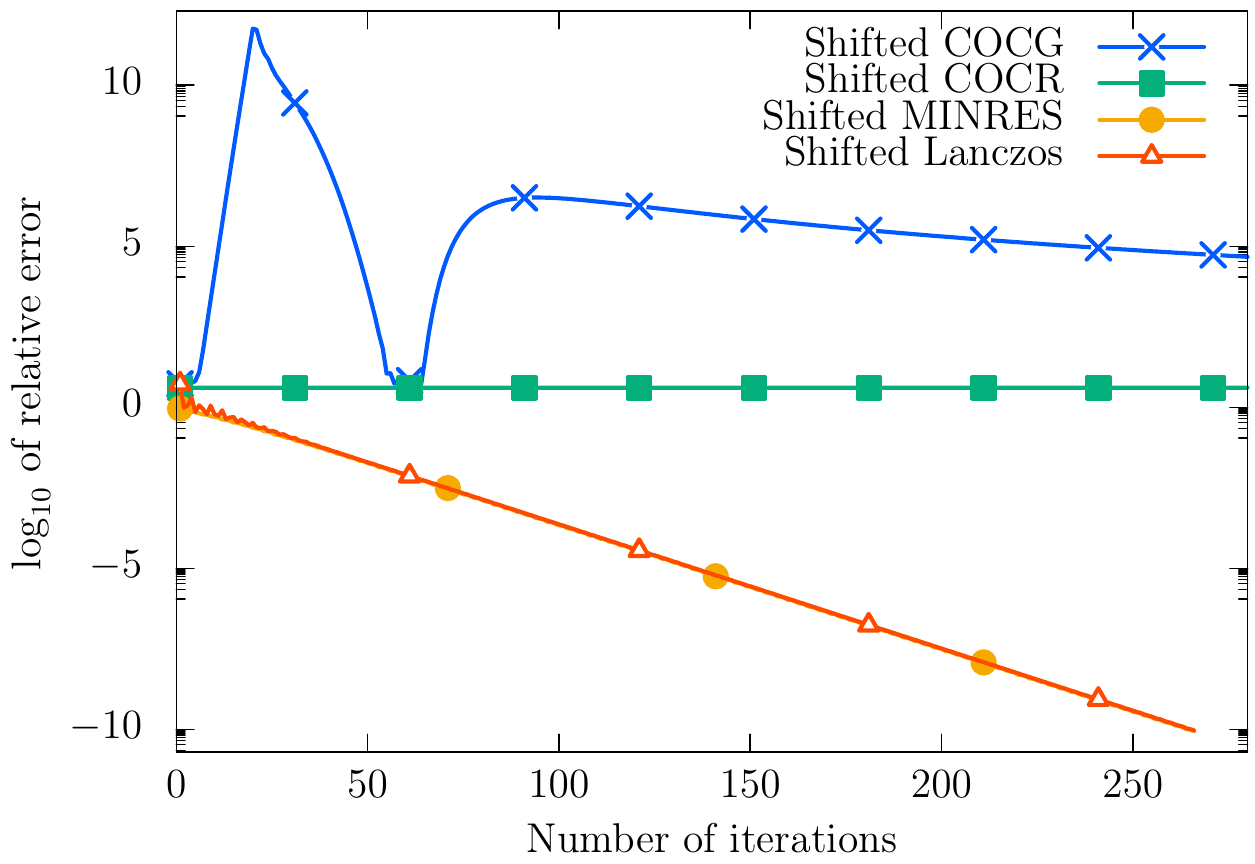}
		\subcaption{conf5.4-00l8x8-2000.}
		\label{fig:conf5.4-00l8x8-2000}
	\end{minipage}
	\begin{minipage}{0.49\hsize}
		\centering\includegraphics[scale=0.6]{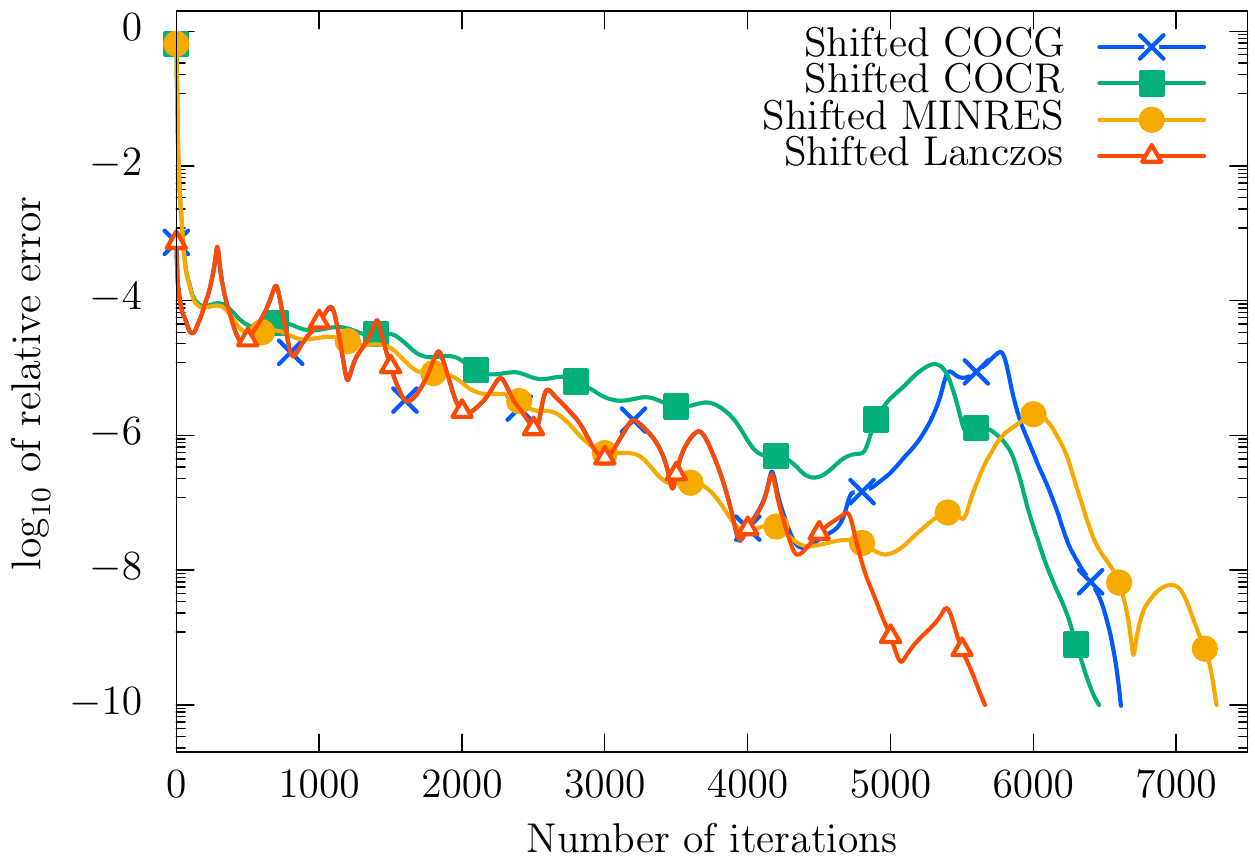}
		\subcaption{apache2.}
		\label{fig:apache2}
	\end{minipage}
	\begin{minipage}{0.49\hsize}
		\centering\includegraphics[scale=0.6]{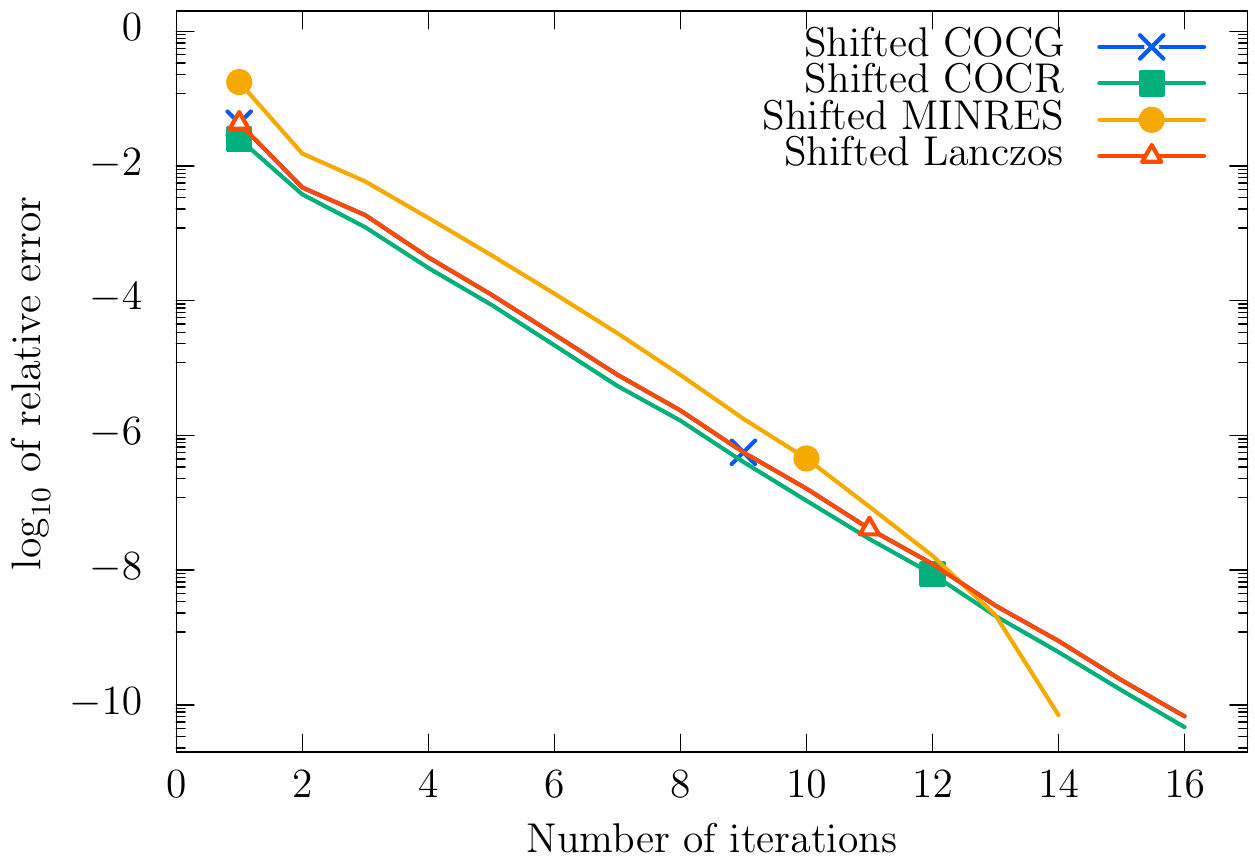}	
		\subcaption{VCNT1000000std.}
		\label{fig:VCNT1000000std}
	\end{minipage}
	\begin{minipage}{0.49\hsize}
		\centering\includegraphics[scale=0.58]{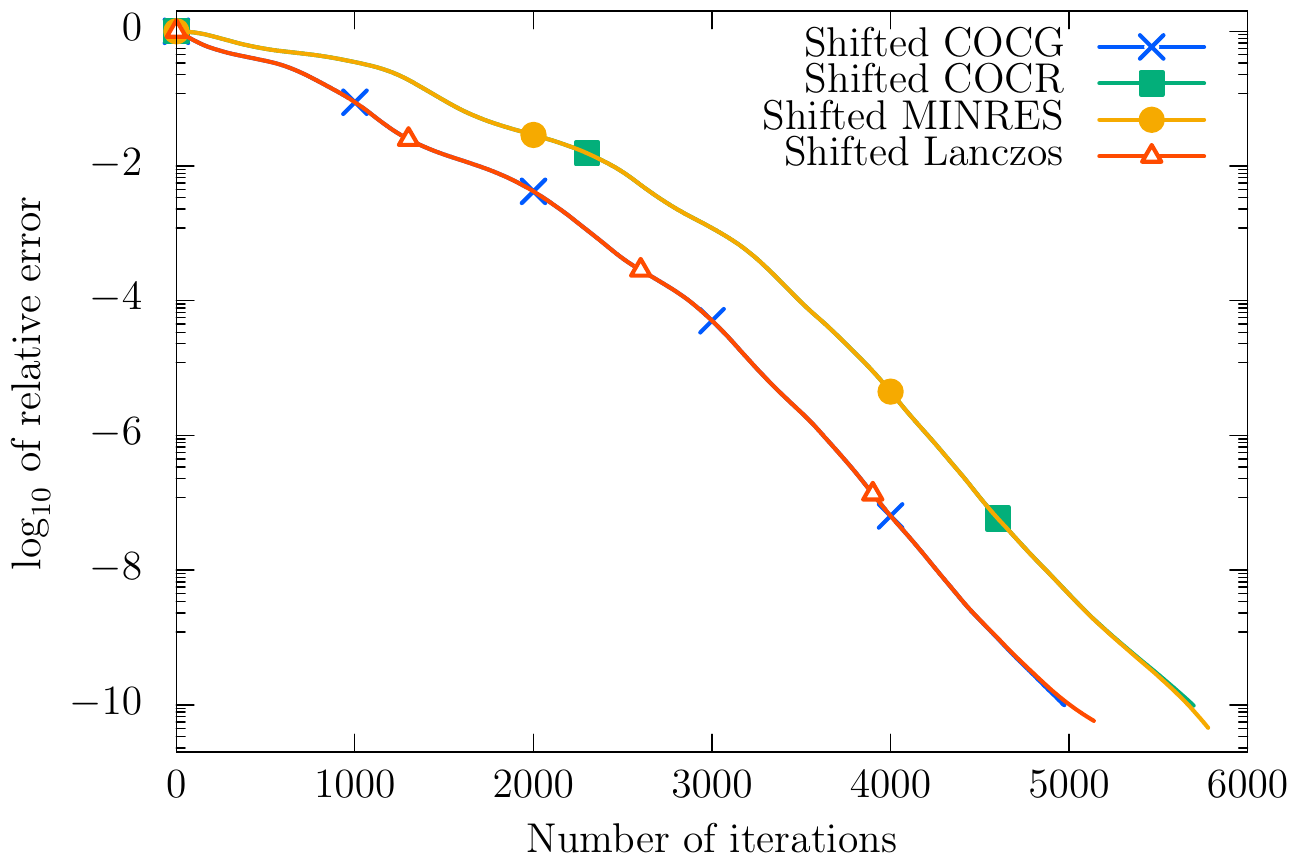}
		\subcaption{CurlCurl\_3.}
		\label{fig:CurlCurl_3}
	\end{minipage}
	\begin{minipage}{0.49\hsize}
		\centering\includegraphics[scale=0.58]{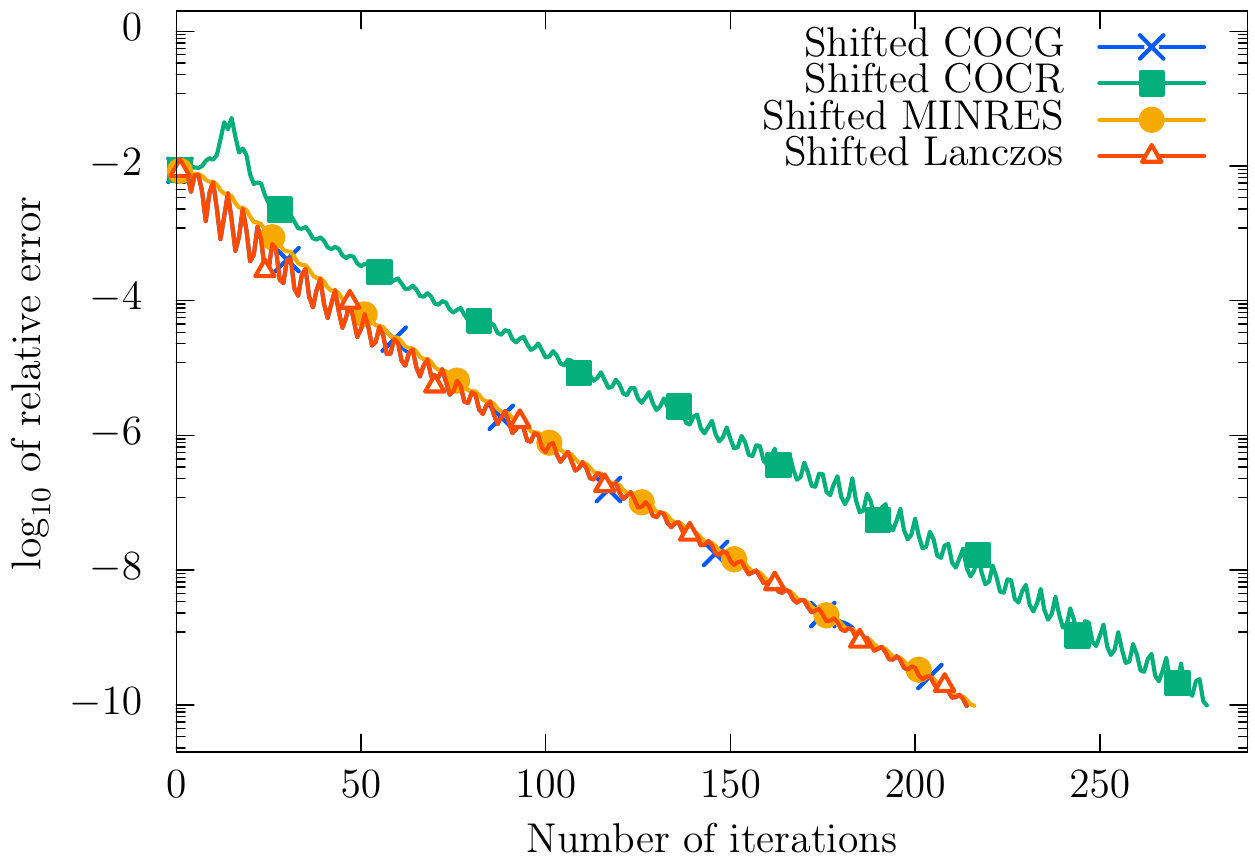}
		\subcaption{thermal2.}
		\label{fig:thermal2}
	\end{minipage}	
	\caption{Relative error vs.\ number of iterations.}
	\label{fig:practical}
\end{figure}

Table~\ref{tb:time} gives the CPU time in seconds taken by the four methods.
The symbol~* stands for the least CPU time for each matrix.
The shifted Lanczos method was competitive with or faster than the MINRES method except on CurlCurl\_3, whereas the shifted COCR method took more CPU time.
CCOG required fewer iterations than the shifted Lanczos method but took more CPU time on CurlCurl\_3 and thermal2.
This is because the shifted COCG (and COCR) method used $z_1 \ne \mathbb{R}$ as the seed, and its complex symmetric Lanczos iterations in Line~5 of Algorithm~\ref{alg:shiftedCOCG} were performed in complex arithmetic.
When setting the seed to zero, its complex symmetric Lanczos iterations were performed with real operations; however, the method required more CPU time and iterations than the case with shift $z_1$.

\setlength{\tabcolsep}{7pt}
\begin{table}[t]
	\scriptsize
	\centering
	\caption{Maximum relative residual norm for the direct method}
	\begin{tabular}{lrrrl}
		\hline\noalign{\smallskip}
		Matrix & $\max_i \| \boldsymbol{v} - (z_i \mathrm{I} - A) \tilde{\boldsymbol{x}}^{(i)} \| / \| \boldsymbol{v} \|$ \\
		\noalign{\smallskip}\hline\noalign{\smallskip}
		mhd1280b & $2.0\cdot10^{-16}$ \\
		conf5.4-00l8x8-2000 & $1.0\cdot10^{-15}$ \\
		apache2 & $4.6\cdot10^{-12}$ \\
		VCNT1000000std & $9.4\cdot10^{-16}$ \\
		CurlCurl\_3 & $5.0\cdot10^{-12}$\\
		thermal2 & $6.4\cdot10^{-16}$ \\
		\noalign{\smallskip}\hline
	\end{tabular}
	\label{tb:direct_solver}
	\begin{minipage}{0.98\hsize}
		Matrix: name of the matrix, $\max_i \| \boldsymbol{v} - (z_i \mathrm{I} - A) \tilde{\boldsymbol{x}}^{(i)} \| / \| \boldsymbol{v} \|$: maximum relative error norm.
	\end{minipage}
\end{table}

\begin{table}[t]
	\scriptsize
	\centering
	\caption{Number of iterations and CPU times [s] on test matrices for the methods compared}
	\begin{tabular}{lrrrrrr}
		\hline\noalign{\smallskip}
		Method & \multicolumn{2}{c}{mhd1280b} & \multicolumn{2}{c}{conf5.4-00l8x8-2000} &
		\multicolumn{2}{c}{apache2} \\
		& iter & time & iter & time & iter & time 
		\\
		\noalign{\smallskip}\hline\noalign{\smallskip}						
		\texttt{mldivide} & & 0.08 & & 6{,}505 
		& & 1{,}978 
		\\
		Shifted COCG & 446 & 0.05 & --- &
		& 6{,}613 & 175	\\
		Shifted COCR & 437 & 0.04 & --- &
		& 6{,}461 & 163 \\			
		Shifted MINRES & 281 & 0.03 & 266 & 1.67 
		& 7{,}283 & 95.4 \\			
		Shifted Lanczos & 219 & *0.02 & 266 & *1.52 
		& 5{,}662 & *68.4 \\
		\noalign{\smallskip}\hline	
	\end{tabular}
	\begin{tabular}{lrrrrrr}
		\hline\noalign{\smallskip}
		Method & \multicolumn{2}{c}{VCNT1000000std} & \multicolumn{2}{c}{CurlCurl\_3} & \multicolumn{2}{c}{thermal2} \\
		& iter & time & iter & time & iter & time \\
		\noalign{\smallskip}\hline\noalign{\smallskip}			
		\texttt{mldivide} & & 555 & & 749{,}696 & & 563 \\
		Shifted COCG & 16 & 1.18 & 4{,}974 & 246 & 214 & 11.40 \\
		Shifted COCR & 16 & 1.16 & 5{,}698 & 382 & 279 & 14.57 \\
		Shifted MINRES & 14 & *0.73 & 5{,}779 & 156 & 216 & 6.53 \\
		Shifted Lanczos & 16 & 0.81 & 5{,}139 & *135 & 214 & *6.17 \\
		\noalign{\smallskip}\hline	
	\end{tabular}
	\label{tb:time}
\end{table}

\subsection{Spectrum and shift}
We illustrate the effect of the spectrum of $A$ and shift~$z$ on the convergence.
The condition number of shifted matrix~$z \mathrm{I} - A$ is given by 
\begin{align}
\kappa = \frac{\max_i | z - \lambda_i |}{\min_i | z - \lambda_i |}.
\end{align}
This means that the condition number has a connection between the spectrum and shift.
For convenience of the computation of a spectrum, we take a small matrix mhd1280b as an example.
The largest and smallest eigenvalues of the matrix are approximately $70.32$ and $1.48 \cdot 10^{-11}$, respectively.
Substitute the shift $z = \lambda_1 + \zeta \mathrm{i}$ for $\zeta = 10^j$, $j = -1$, $-2$, $-3$, and $-4$ for controlling the condition number.
Then, the corresponding condition numbers of $z \mathrm{I} - A$ are as given in Table~\ref{tb:spectrum_shift}.
Table~\ref{tb:spectrum_shift} gives the required number of iterations and CPU time for the compared methods for each value of $\zeta$ value.
The stopping criterion was that the relative error became less than or equal to $10^{-10}$.
The table also shows that the shifted Lanczos method was competitive with or outperformed other methods in terms of the CPU time.
It shows that as the condition number increases, the required number of iterations tends to increase.

\begin{table}[t]
	\scriptsize
	\centering
	\caption{Number of iterations and CPU times [s] on test matrices for the methods compared for different values of shift~$z = \lambda_1 + \zeta \mathrm{i}$}
	\begin{tabular}{lrrrrrrrr}
		\hline\noalign{\smallskip}
		$\zeta$ & \multicolumn{2}{c}{$10^{-1}$} & \multicolumn{2}{c}{$10^{-2}$} & \multicolumn{2}{c}{$10^{-3}$} & \multicolumn{2}{c}{$10^{-4}$} \\
		condition number & \multicolumn{2}{c}{$1.1\cdot10^3$} & \multicolumn{2}{c}{$1.2\cdot10^4$} & \multicolumn{2}{c}{$1.0\cdot 10^5$} & \multicolumn{2}{c}{$8.9\cdot 10^5$} \\
		& iter & time & iter & time & iter & time & iter & time \\
		\noalign{\smallskip}\hline\noalign{\smallskip}
		Shifted COCG    & 118 & *0.01 & 357 &  0.03 & 1{,}087 &  0.10 & 3{,}013 &  0.27 \\
		Shifted COCR    & 117 & *0.01 & 331 &  0.03 & 1{,}018 &  0.10 & 3{,}129 &  0.29 \\
		Shifted MINRES  &  84 & *0.01 & 231 &  0.03 &     690 &  0.07 & 1{,}988 &  0.19 \\
		Shifted Lanczos &  76 & *0.01 & 226 & *0.02 &     680 & *0.06 & 1{,}894 & *0.17 \\
		\noalign{\smallskip}\hline
	\end{tabular}
	\label{tb:spectrum_shift}
\end{table}
\begin{figure}[t]
	\centering
	\begin{minipage}{0.49\hsize}
		\centering\includegraphics[scale=0.58]{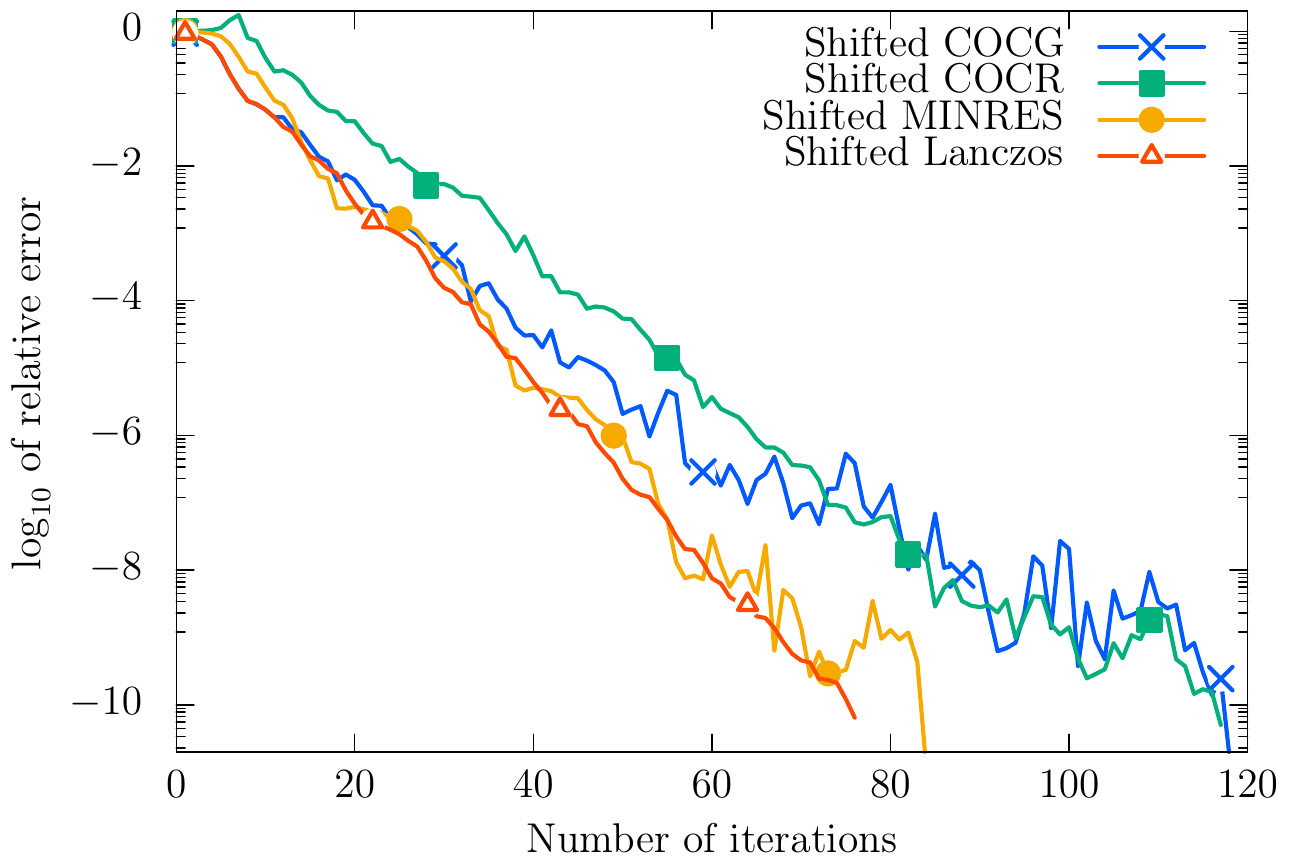}
		\subcaption{$\zeta = 10^{-1}$.}
		\label{fig:zeta=1e-1}
	\end{minipage}
	\begin{minipage}{0.49\hsize}
		\centering\includegraphics[scale=0.58]{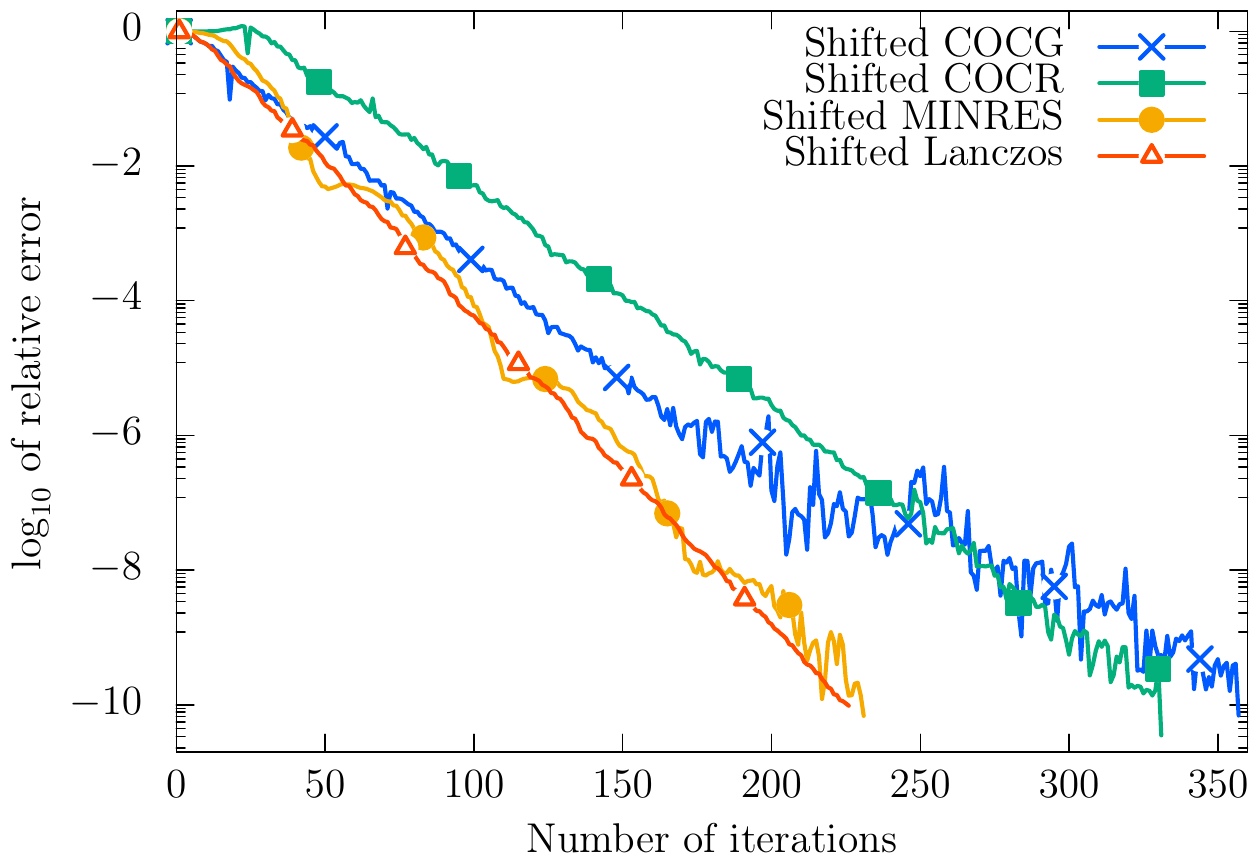}
		\subcaption{$\zeta = 10^{-2}$.}
		\label{fig:zeta=1e-2}
	\end{minipage}
	\begin{minipage}{0.49\hsize}
		\centering\includegraphics[scale=0.58]{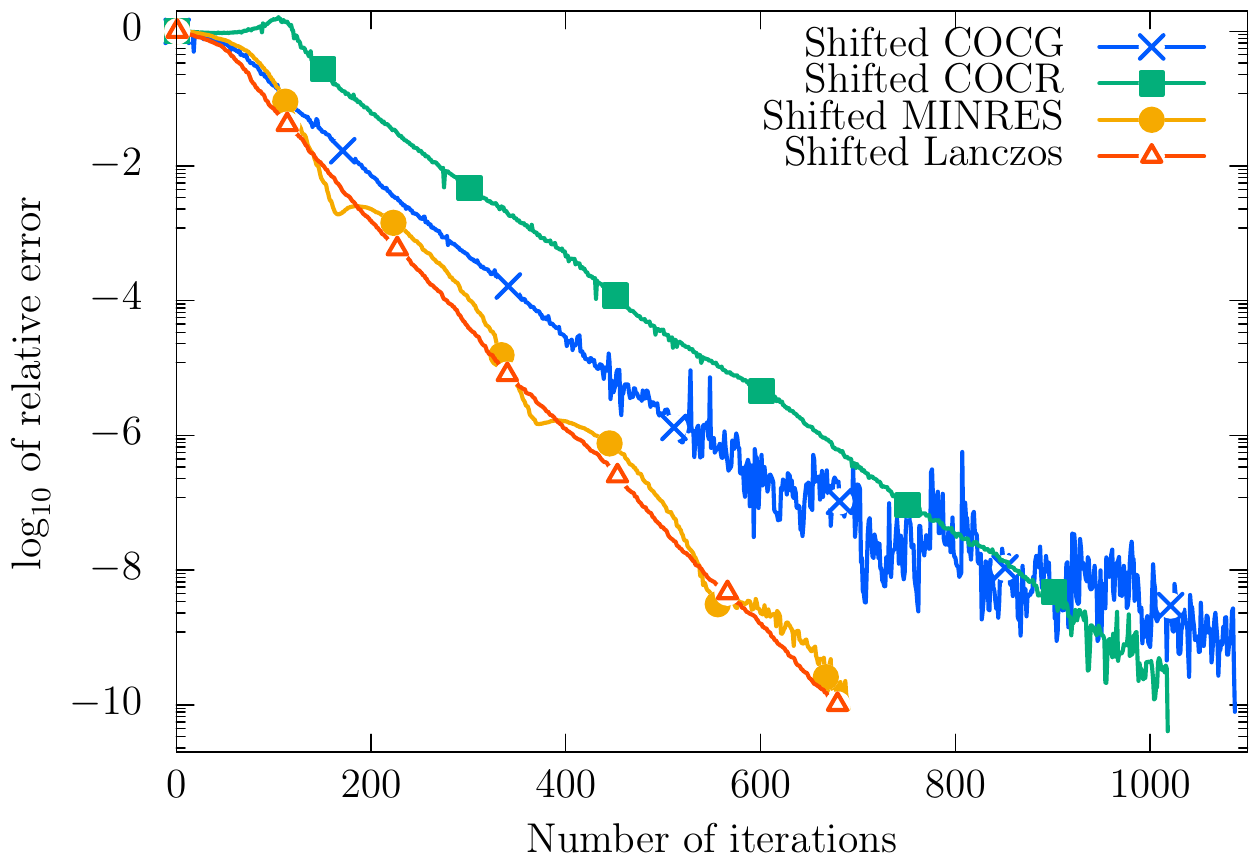}
		\subcaption{$\zeta = 10^{-3}$.}
		\label{fig:zeta=1e-3}
	\end{minipage}
	\begin{minipage}{0.49\hsize}
		\centering\includegraphics[scale=0.58]{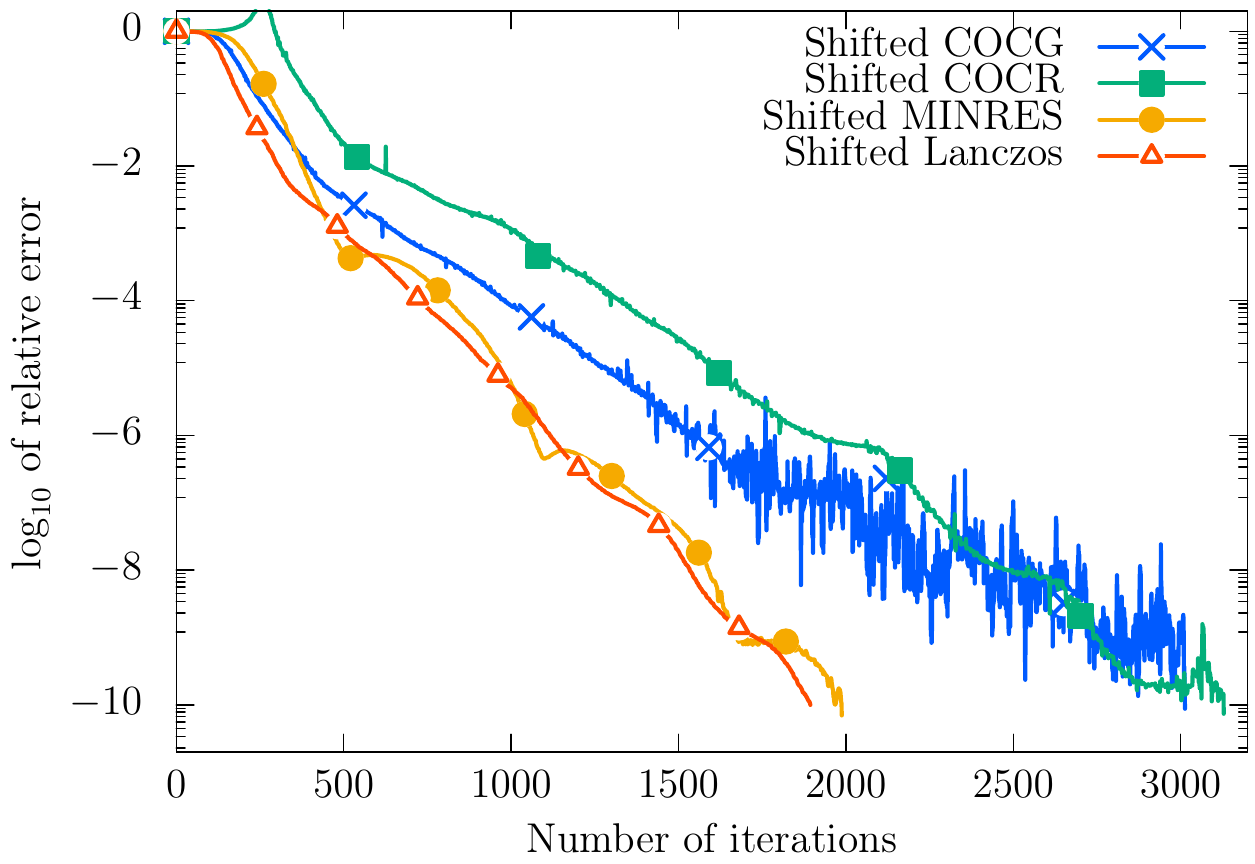}	
		\subcaption{$\zeta = 10^{-4}$.}
		\label{fig:zeta=1e-4}
	\end{minipage}
	\caption{Relative error vs.\ number of iterations for mdh1208b for different values of $\zeta$.}
	\label{fig:spectrum_shift}
\end{figure}

\subsection{Estimation of error}
We tested the estimates~$\mu_{k, d}$ and $\nu_{k, d}$ developed in Section~\ref{sec:errest} on the matrices used in Section~\ref{sec:practical}.
Figure~\ref{fig:practical_e} shows the relative error and its estimates~$\mu_{k, d}$ and $\nu_{k, d}$ with $d = 5$ versus the number of iterations.
Here, the plotted curve corresponds to the case where the required number of iterations was the largest among shifts $z_i$, $i=1$, $2$, $\dots$, $16$.
For presentation, these estimates are normalized by $\boldsymbol{v}^\mathsf{H} \tilde{\boldsymbol{x}}^{(i)}$.
Both estimates were accurate on conf5.4-00l8x8-2000, VCNT1000000std, and thermal2.
The estimate~$\zeta_{k, 5}$ was accurate on mdh1280b.
Both estimates underestimated the error on apache2 and CurlCurl\_3 because the assumption made in Section~\ref{sec:errest} might not hold and the shifted Lanczos method lacks the monotonicity of an error norm such as the one in the CG method~(cf.~\cite{StrakosTichy2002ETNA}).
The oscillations of the estimates are similar to those observed for the errors on apache2 and thermal2.
However, we have no theoretical justification of these estimates at this time and when they capture the convergence well, in particular for finite-precision arithmetic, is open.

\begin{figure}[t!]
	\centering
	\begin{minipage}{0.49\hsize}
		\centering\includegraphics[scale=0.6]{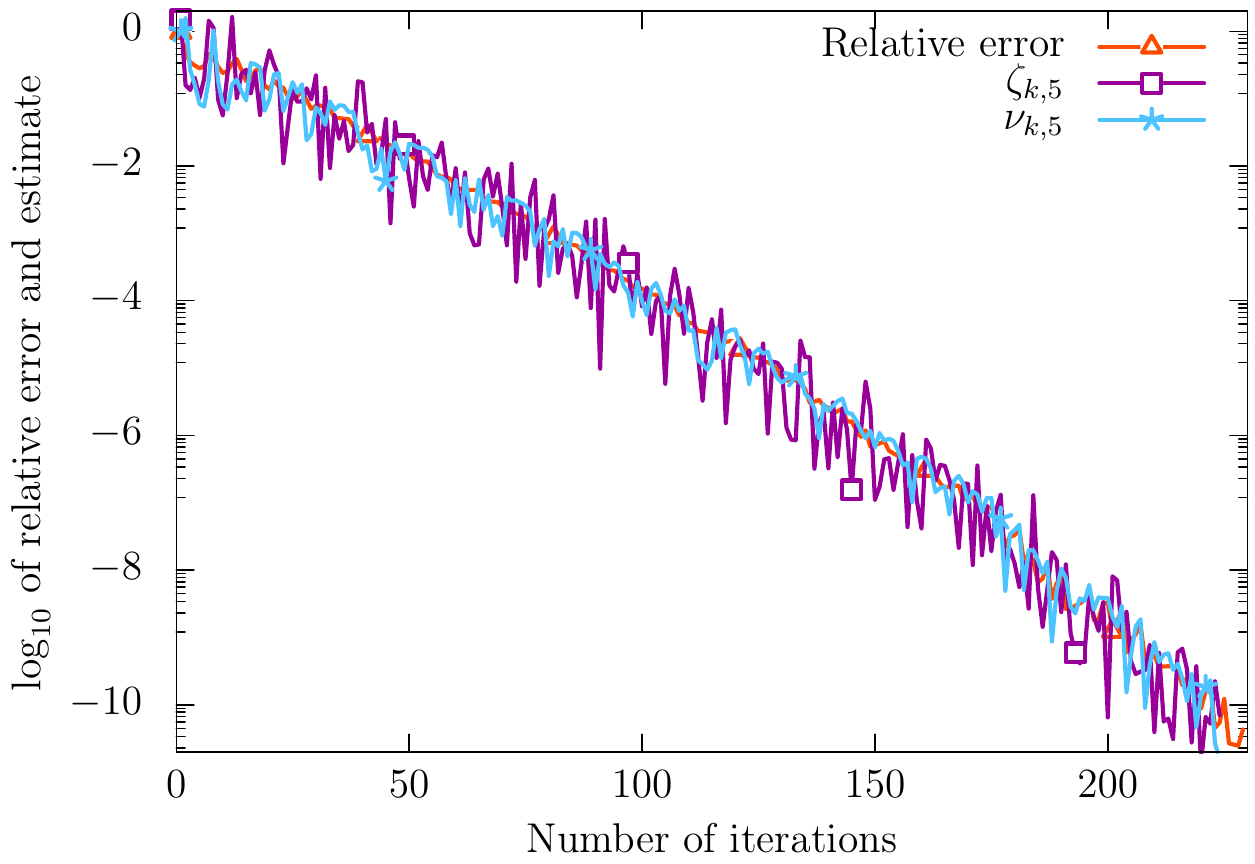}
		\subcaption{mhd1280b.}
		\label{fig:mhd1280b_e}
	\end{minipage}
	\begin{minipage}{0.49\hsize}
		\centering\includegraphics[scale=0.6]{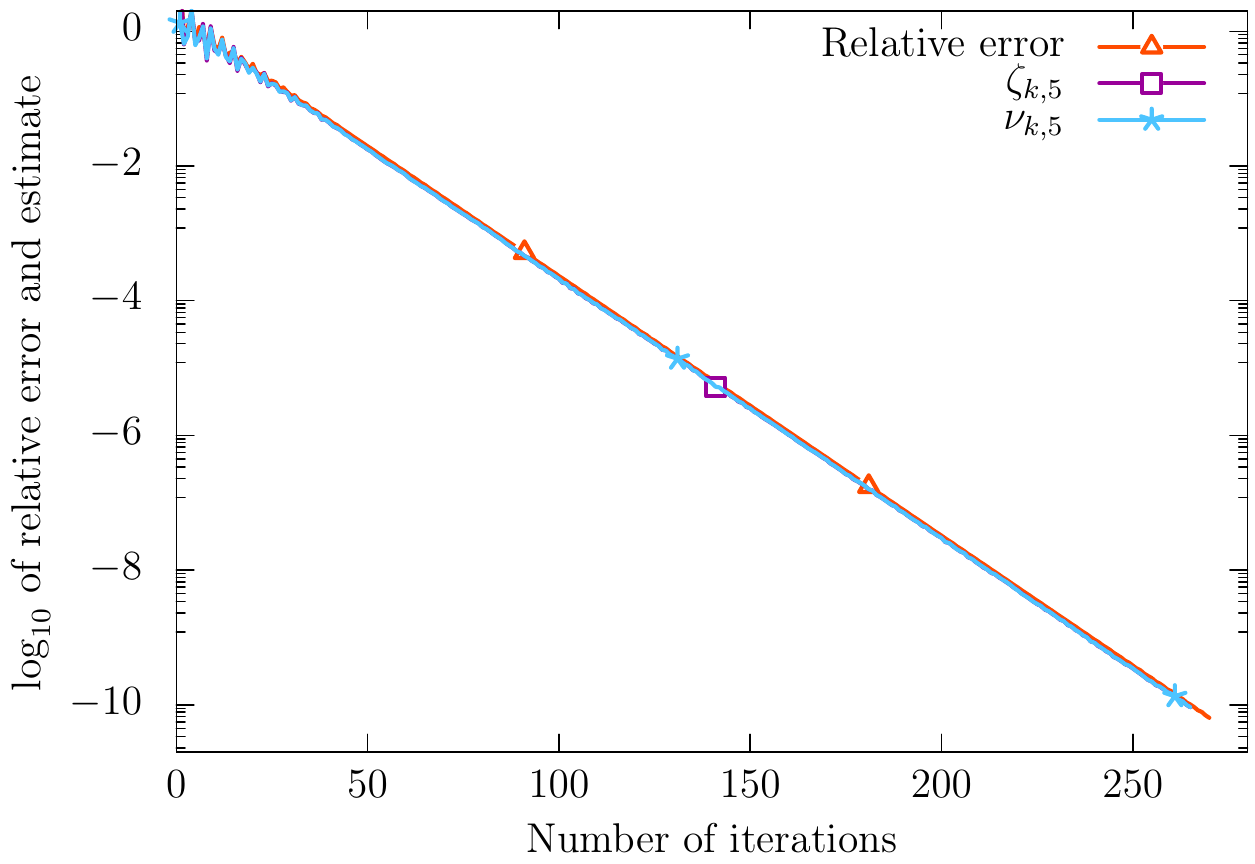}
		\subcaption{conf5.4-00l8x8-2000.}
		\label{fig:conf5.4-00l8x8-2000_e}
	\end{minipage}
	\begin{minipage}{0.49\hsize}
		\centering\includegraphics[scale=0.6]{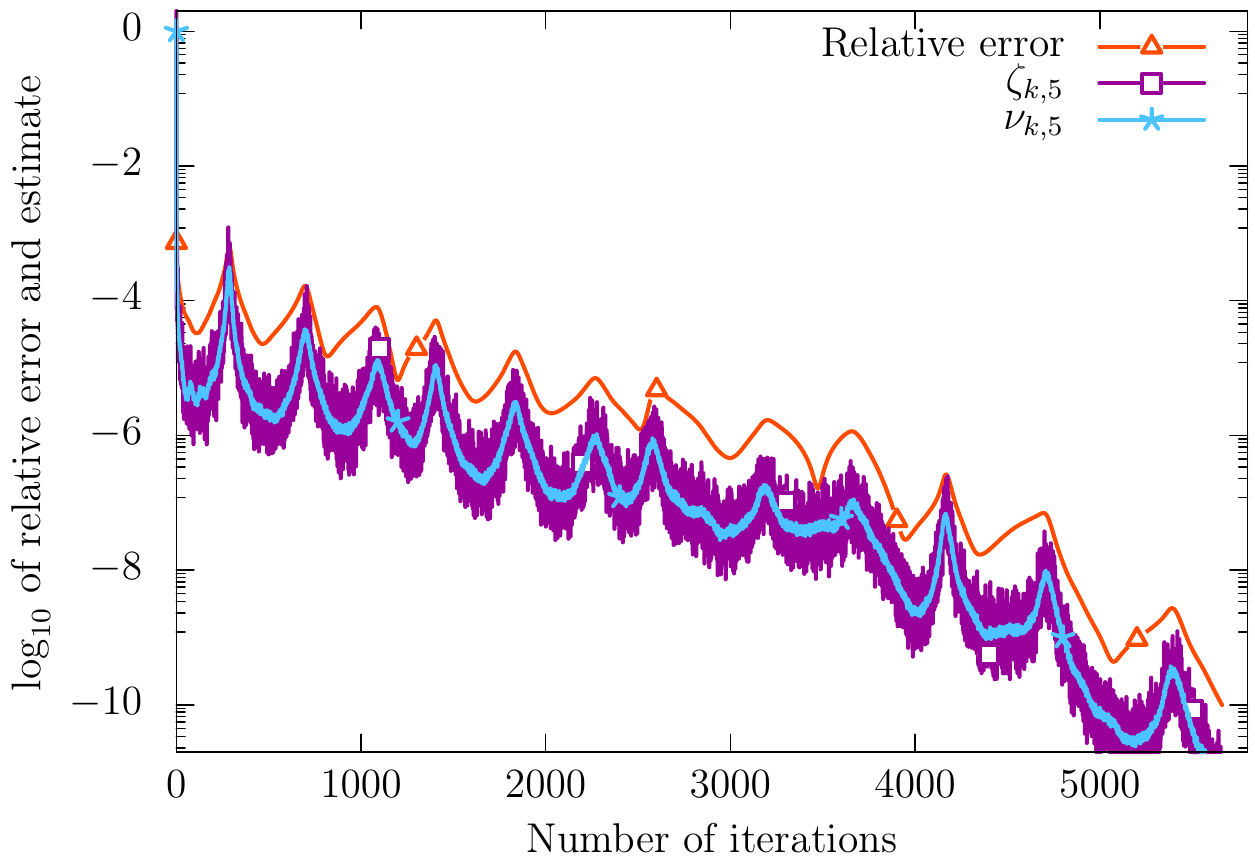}
		\subcaption{apache2.}
		\label{fig:apache2_e}
	\end{minipage}
	\begin{minipage}{0.49\hsize}
		\centering\includegraphics[scale=0.6]{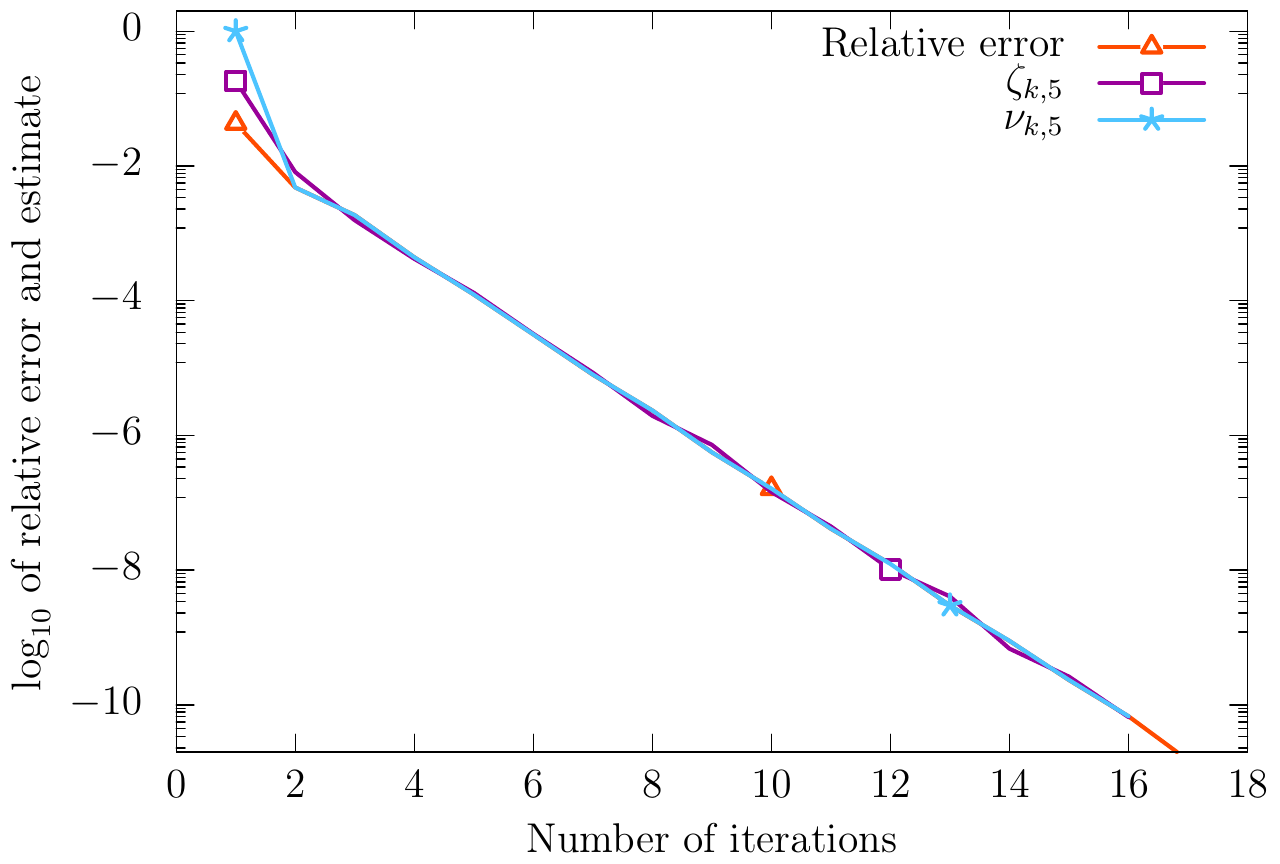}	
		\subcaption{VCNT1000000std.}
		\label{fig:VCNT1000000std_e}
	\end{minipage}	
	\begin{minipage}{0.49\hsize}
		\centering\includegraphics[scale=0.6]{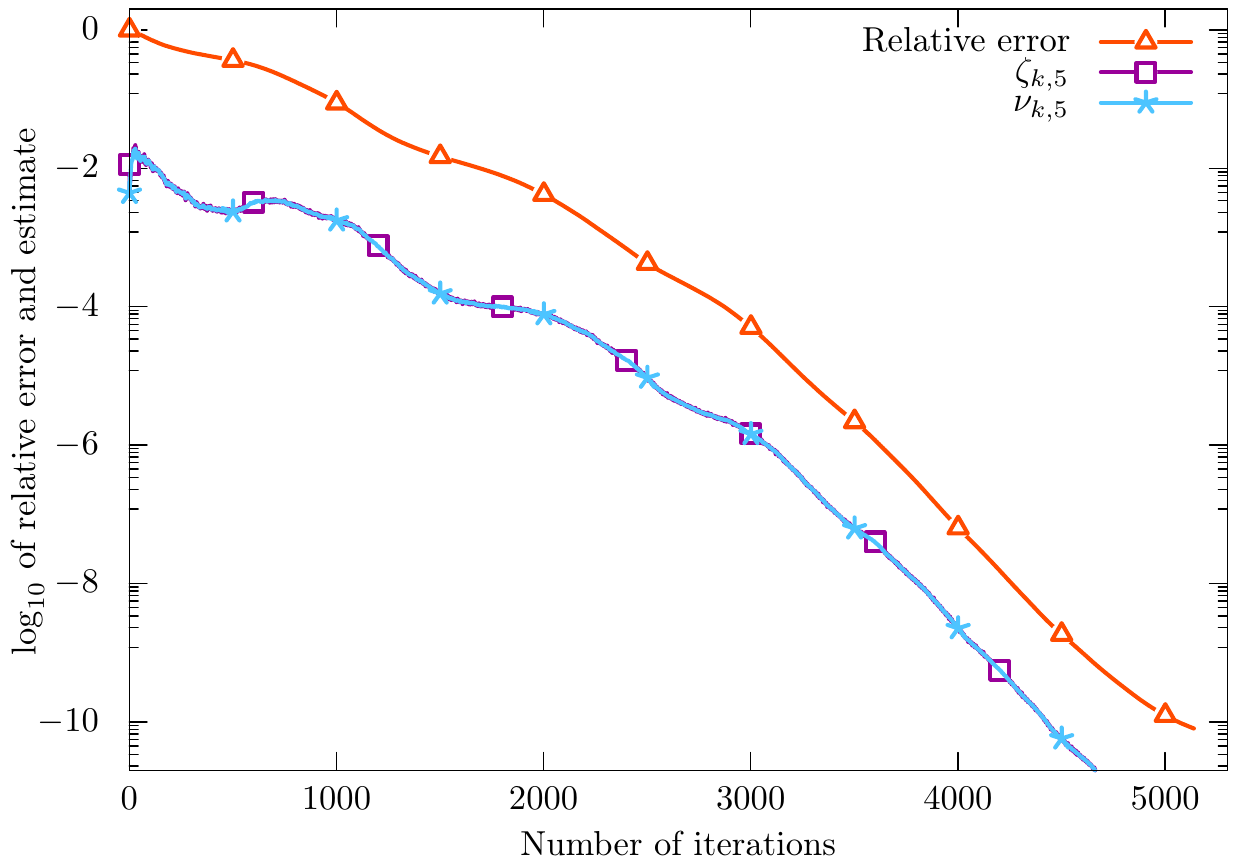}
		\subcaption{CurlCurl\_3.}
		\label{fig:CurlCurl_3_e}
	\end{minipage}
	\begin{minipage}{0.49\hsize}
		\centering\includegraphics[scale=0.6]{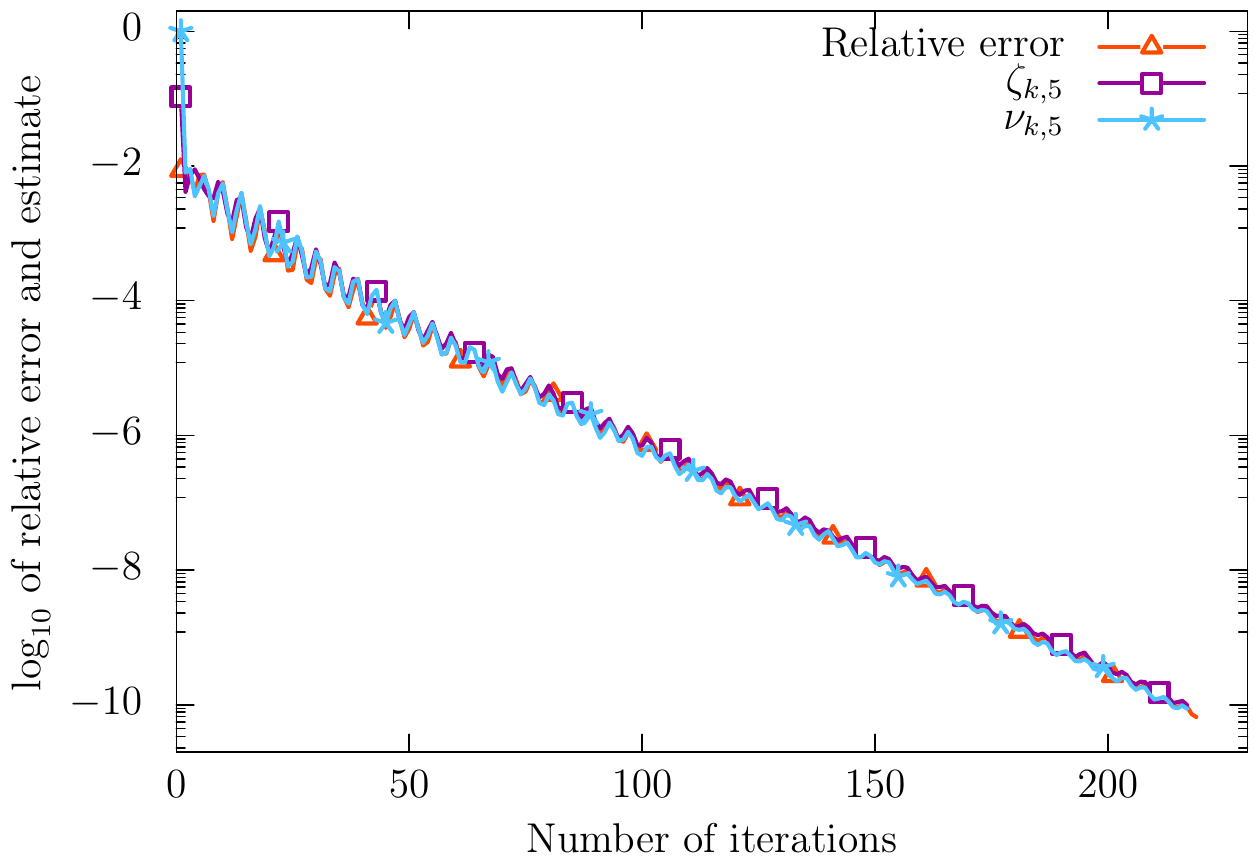}
		\subcaption{thermal2.}
		\label{fig:thermal2_e}
	\end{minipage}
	\caption{Relative error and its estimates $\mu_{k, d}$ and $\nu_{k, d}$ with $d = 5$ vs. number of iterations.}
	\label{fig:practical_e}
\end{figure}

\section{Conclusions} \label{sec:conc}
We explored the computation of quadratic forms of Hermitian matrix resolvents.
In contrast to previous shifted Krylov subspace methods, our method approximates the matrix resolvent directly.
The underlying concept used in approximating the resolvent is to exploit the moment-matching property of a shifted Lanczos method by solving the Vorobyev moment problem.
We showed that the shifted Lanczos method matches the first~$k$ moments of the original model and those of the reduced model and extended the scope of the problems that the standard Lanczos method can solve.
We derived the inverse of a linear operator representing the reduced model and related it to an entry of a Jacobi matrix resolvent.
The entry can be efficiently computed by using a recursive formula.
Previous shifted Krylov subspace methods work on a real symmetric matrix with a complex shift, whereas the proposed method works on a Hermitian matrix with a complex shift and does not break down, provided that the shift is not in the interior of the extremal eigenvalues of the Hermitian matrix.
We gave an error bound and estimates for the shifted Lanczos method for quadratic forms.
Numerical experiments on matrices drawn from real-world applications showed that the shifted Lanczos method is competitive with the shifted MINRES method and outperforms it when solving some problems.
We illustrated the effect of the spectrum and shift on the convergence and showed that the error estimate is reasonable.

We intend to perform preconditioning for the shifted Lanczos method for quadratic forms in future work.
The shifted COCG and COCR methods can use preconditioning if the preconditioned matrix is complex symmetric; however, for the shifted Lanczos method, it is not trivial to incorporate preconditioning.
How to monitor the convergence of the proposed method is also not trivial, and we leave the development of a more rigorous and/or sophisticated estimate for future work.
With regard to the error estimate, there will be an interesting connection with the Gauss quadratures (cf.\ \cite{GolubMeurant1993,GolubMeurant1997BIT,MeurantTichy2013NUMA}).

\section*{Acknowledgments}
The author would like to thank Editage for English language editing, Professor Zden\v{e}k Strako\v{s} for giving the author a chance to attend his course which motivated the author to study this work, and Professor Ken Hayami for helpful discussions.

\end{document}